%
\documentclass[10pt,letterpaper]{article}
\usepackage{amsfonts}
\usepackage{amssymb}
\usepackage{amsmath}
\usepackage{amsthm}
\usepackage{latexsym}
\hfuzz3pt 
\newlength{\defbaselineskip}
\setlength{\defbaselineskip}{\baselineskip}
\newcommand{\setlinespacing}[1]{\setlength{\baselineskip}{#1 \defbaselineskip}}

\topmargin -0.0in \textheight 8in \oddsidemargin 0.1in \textwidth
5.8in \numberwithin{equation}{section}

\newtheorem{thm}{\indent Theorem}

\newtheorem{cor}{\indent Corollary}
\newtheorem{lem}{\indent Lemma}
\newtheorem{prop}{\indent Proposition}
\theoremstyle{remark}
\newtheorem{rem}{\indent Remark}
\theoremstyle{definition}
\newtheorem{defn}{\indent Definition}

\newcommand{\X}{\mathcal{X}}

\newcommand{\R}{\mathbb{R}}

\newcommand{\N}{\mathbb{N}}

\newcommand{\ynn}{Y^{(n)}}

\newcommand{\set}[1]{\left\{#1\right\}}

\newcommand{\du}{{d}u}

\newcommand{\EE}{\mathbb{E}}

\newcommand{\PP}{\mathbb{P}}
\newcommand{\QQ}{\mathbb{Q}}

\newenvironment{sys*}{\begin{eqnarray*}\left\{\begin{array}{l}}{\end{array}\righ
t.\end{eqnarray*}}

\newcommand{\MM}{{\mathcal M}}

\newcommand{\CC}{{\mathcal C}}

\newcommand{\RR}{{\mathcal R}}

\newcommand{\FE}{{\mathbb F}}
\newcommand{\PO}{{\mathcal P}}
\newcommand{\SU}{{\mathcal S}}
\newcommand{\SUb}{\SU_1}
\newcommand{\DE}{{\mathcal V}}

\newcommand{\FF}{{\mathcal F}}
\newcommand{\FFt}{\FF_t}
\newcommand{\GG}{{\mathcal G}}
\newcommand{\XX}{{\mathcal X}}
\newcommand{\XC}{{\mathcal X\mathcal C}}
\newcommand{\YY}{{\mathcal Y}}

\renewcommand{\CC}{{\mathcal D}} 
\newcommand{\DD}{{\mathcal C}}   
\renewcommand{\PP}{{\mathbb P}}
\renewcommand{\QQ}{{\mathbb Q}}

\newcommand{\TT}{{\mathcal T}}
\newcommand{\esssup}{{\mathrm {esssup}}}

\newcommand{\Ynn}{(Y^{(n)})_{n\in\N}}
\newcommand{\conv}{\operatorname {conv}}
\newcommand{\ep}{\varepsilon}

\newcommand{\ra}{\rightarrow}

\newcommand{\hy}{\hat{Y}}

\renewcommand{\ep}{\varepsilon}

\newcommand{\ind}[1]{ {\mathbf 1}_{\set{#1}}}
\newcommand{\inds}[1]{ {\mathbf 1}_{{#1}}}

\newcommand{\lf}{L^0_+(\FF)}

\newcommand{\ccp}{[\CC|\GG]^{\circ}}
\newcommand{\xpp}[1]{[#1|\GG]^{\circ}}
\newcommand{\fc}[2]{[#1|#2]^{\circ}}
\newcommand{\fcc}[2]{[#1|#2]^{\circ\circ}}
\newcommand{\ccpp}{[\CC|\GG]^{\circ\circ}}
\newcommand{\ccpccp}{\xpp{\ccp}}
\newcommand{\blp}{{\mathbf B}_+(\GG)}
\newcommand{\eg}[1]{\EE[#1|\GG]}
\newcommand{\et}[1]{\EE[#1|\FF_t]}
\newcommand{\es}[1]{\EE[#1|\FF_s]}
\newcommand{\epp}[1]{\EE[#1|\FF_p]}
\newcommand{\one}{{\mathbf 1}}

\newcommand{\oll}[1]{\overline{\overline{#1}}}
\newcommand{\al}{\alpha}
\newcommand{\hh}{\hat{h}}

\newcommand{\hm}{h_{max}}

\newcommand{\DP}{\DD^{\times}}

\newcommand{\tmk}{t^{(m)}_k}

\newcommand{\CP}{\CC^{\times}}
\newcommand{\CPP}{\CC^{\times\times}}

\begin{document}
\setlinespacing{1.2}
  \begin{center}%
    {\LARGE \bf A filtered version of the Bipolar Theorem of Brannath and
Schachermayer
\par}%

    \vskip 3em%
    {\large
     \bigskip
         Gordan \v Zitkovi\' c\footnote{Department of Statistics, Columbia
University, New York, NY}\\
Department of Statistics \\ 618
Mathematics Building \\ Columbia
University \\ New York, NY 10027\\
Phone:\  (212) 854-3652\\ \texttt{ gordanz@stat.columbia.edu}

} \vskip 1.5em    {\large January 4, 2001 \par}

  \end{center}

\begin{abstract}

\small\normalsize We extend the Bipolar Theorem of
Brannath and Scha\-cher\-ma\-yer (1999) to the space
of nonnegative c\` adl\` ag supermartingales on a
filtered probability space. We formulate the notion of
fork-convexity as an analogue to convexity in this
setting. As an intermediate step in the proof of our main
result we establish a conditional version of the Bipolar
theorem. In an application to mathematical finance we
describe the structure of the set of dual processes of
the utility maximization problem of Kramkov and 
Schachermayer (1999) and give a
budget-constraint characterization of admissible
consumption processes in an incomplete semimartingale
market.
 \end{abstract}

\

\noindent{\sl Key words:} bipolar theorem, stochastic processes,
positive supermartingales, duality,
mathematical finance

\section{INTRODUCTION}\
 
The classical Bipolar Theorem of functional analysis states
that the bipolar ${\mathcal D}^{\circ\circ}$ of a subset $\mathcal D$ of a
locally convex vector
space is the smallest closed, balanced and convex set containing $\mathcal
D$.
The locally convex structure of the underlying space is of great
 importance since the proof
relies heavily on the Hahn-Banach Theorem. In their
recent article, \cite{bipolar} exploit the order
structure of $L^0_+(\Omega,\FF,\PP)$ - (the space of all
nonnegative measurable functions on the probability space
$(\Omega,\FF,\PP)$ equipped with the topology of
convergence in measure) - to obtain an extension of the
Bipolar Theorem to this (generally not locally convex)
space. Indeed, if $\PP$ is a diffuse measure, the
topological dual of $L_0$ reduces to $\set{0}$ (see e.g.
\cite{kapr}, Theorem 2.2). Brannath and Schachermayer
consider a dual pair of convex cones $<L^0_+,L^0_+>$ with
the scalar product $<f,g>\mapsto \EE[fg]$ taking values
in $[0,\infty]$ and successfully identify the bipolar of
a subset $\CC$ of $L^+_0$ as the smallest convex, closed
in probability and solid set containing $\CC$. The
motivation for this extension comes from mathematical
finance, where it is customary to consider the natural
duality between the set of attainable contingent claims
and a variant of the set of all equivalent local
martingale measures.  For the problem of maximizing the
utility of the terminal wealth in  general incomplete
semimartingale securities market model, the set of all
(Radon-Nikodym densities of) equivalent local martingale
measures turns out to be too small - in terms of
closedness and compactness properties. The appropriate
enlargement, as described in \cite{4}, is obtained by
passing to the bipolar. This is where an operative
description - provided by the Bipolar Theorem for Subsets
of $L^0_+$ - is a sine qua non.

Inspired by, and heavily relying on, the result of
Brannath and Schachermayer, we decided to go one step
further and derive an analogue of the Bipolar Theorem for
sets of stochastic processes.  Additional motivation came
from mathematical finance - from an attempt to
characterize the optimal intratemporal consumption policy
for an investor in an incomplete semimartingale market.
Here it is not enough to study the relationship between
equivalent local martingale measures and attainable
contingent claims.  The time-dependent nature of the
problem forces us to consider the whole wealth process
and the corresponding dual "density processes" of
equivalent local martingale measures. Also, the
enlargement necessary to rectify the lack of closedness
and compactness properties of the set of all density
processes \text{(see \cite{4})} must take place in a
considerably more 'hostile' environment - the set of
nonnegative adapted stochastic processes. Specifically,
for a set of nonnegative c\` adl\` ag processes
${\mathcal X}$ defined in terms of stochastic integrals
with respect to a fixed semimartingale, the set $\YY^e$
of density processes corresponds to all strictly positive
c\` adl\` ag martingales $Y$ with $Y_0=1$ such that $(Y_t
X_t)_{t\in [0,T]}$ is a local martingale for all
$X\in\XX$. The enlargement (as proposed in \cite{4})
$\YY$ of $\YY^e$ consists of all nonnegative c\` adl\` ag
supermartingales $Y$ with $Y_0\leq 1$ such that $(Y_t
X_t)_{t\in [0,T]}$ is a supermartingale for each
$X\in{\mathcal X}$.

In this paper we abstract the important properties of
such an enlargement and phrase it in terms of a suitably
defined notion of the polar. In the manner of
\cite{bipolar} we put the set of all nonnegative adapted
c\` adl\` ag processes in duality with itself. However,
this time the scalar product is no longer a numerical
function anymore and it takes values in a suitably chosen
quotient space of the space of nonnegative stochastic
processes.

For our analysis we focus on sets  of nonnegative
supermartingales endowed with mild additional properties.
These properties are analogous to those of the set of all
density processes of equivalent local martingale
measures. In this context the new notion of
fork-convexity turns out to be the right analogue for the
concept of convexity in the classical case. We identify
the bipolar of a set of supermartingales as its
fork-convex, solid and closed hull, with notions of
solidity and closedness suitably defined. As a
by-product, we also obtain a conditional version of the
Bipolar Theorem which is, at least to the author, an
interesting result in its own sake. We then apply the
obtained results to describe the structure of the
enlarged set $\YY$ of dual density processes for the
problem of optimal consumption. For this case  we give a
simple budget-constraint characterization of all
admissible consumption densities. The results of this
paper can also be successfully applied to the problem of
optimal consumption in an incomplete semimartingale
market (see \cite{gz2}).

The paper is divided into 4 sections. Section 1 is the
introduction. In Section 2
 we present
the setting and state main theorems. Section 3 contains
the proofs, and in Section 4 we discuss applications to
mathematical finance.

\section{PRELIMINARIES AND THE MAIN RESULT}

In \cite{bipolar}, the following environment is
introduced. Let $(\Omega,\FF,\PP)$ be a probability
space, and let $L^0(\FF)$ denote the set of all
(equivalence classes) of real valued $\FF$-measurable
functions defined on $\Omega$. $L^0(\FF)$ becomes a
topological vector space if we endow it with the topology
of convergence in measure. With $L_+^0(\FF)=\set{ f\geq
0\,:\, f\in L^0(\FF)}$ being the positive orthant of
$L^0(\FF)$, it is possible to define a `scalar product'
on $L^0_+(\FF)$ by setting
\[ <f,g>\mapsto \EE[fg]\in [0,\infty].\]
In this way $L_+^0(\FF)$ is placed in duality with
itself. In this setting, Brannath and Schachermayer give
the following:
\begin{defn} Let $\CC$ be a subset of $L_+^0$. The set
\[ \CC^{\circ}=\set{g\in L_+^0\,:\, <f,g>\leq 1\ \mbox{for all}\
  f\in\CC}\]  is called the {\bf polar}
of $\CC$. A subset $\CC$ of $L_+^0$ is called
\begin{description}
\item[a)] {\bf solid} if for $f\in \CC$ and $g\in L_+^0(\FF)$,\, $g\leq
f$
  a.s. implies $g\in\CC$
\item[b)] {\bf closed} if it is closed with respect to the topology of
  convergence in probability.
\end{description}
\end{defn}
The Bipolar Theorem for Subsets of $L_+^0$ is given in
the following:
 \begin{thm}[{\bf Brannath and Schachermayer$^{(1999)}$}] \label{bp} Let $\CC$ be a subset of
$L_+^0(\FF)$. The bipolar
  $(\CC)^{\circ\circ}= (\CC^{\circ})^{\circ}$
of $\CC$ is the smallest closed, solid and
convex subset of $L_+^0(\FF)$ containing $\CC$.
\end{thm}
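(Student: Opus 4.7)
The plan is to establish the two standard inclusions: first, that $\CC^{\circ\circ}$ is itself closed, solid, convex, and contains $\CC$ (so it contains the closed, solid, convex hull of $\CC$); and second, that any closed, solid, convex set containing $\CC$ already contains $\CC^{\circ\circ}$ (so $\CC^{\circ\circ}$ is the \emph{smallest} such set).

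For the first inclusion, I would write $\CC^{\circ\circ}=\bigcap_{f\in\CC^{\circ}}\{g\in L_+^0(\FF):\EE[fg]\le 1\}$ and verify that each of the intersected sets is convex (by linearity of $g\mapsto\EE[fg]$), solid ($h\le g$ a.s.\ implies $\EE[fh]\le\EE[fg]$), and closed in probability (passing to an a.s.\ convergent subsequence and applying Fatou's lemma). Intersections inherit these three properties, and $\CC\subseteq\CC^{\circ\circ}$ follows immediately from the defining inequality $\EE[fg]\le 1$ for $f\in\CC$, $g\in\CC^{\circ}$.

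For the second inclusion, let $\bar\CC$ be any closed, solid, convex set with $\CC\subseteq\bar\CC$. Since the polar reverses inclusions, $\CC^{\circ\circ}\subseteq\bar\CC^{\circ\circ}$, so it suffices to prove $\bar\CC=\bar\CC^{\circ\circ}$, i.e., that for every $g_0\in L_+^0(\FF)\setminus\bar\CC$ there exists $f\in\bar\CC^{\circ}$ with $\EE[fg_0]>1$. First, since $g_0\wedge n\to g_0$ in probability and $\bar\CC$ is closed, some $g_0\wedge n$ lies outside $\bar\CC$; replacing $g_0$ by this truncation I reduce to $g_0\in L^{\infty}_+$. For each integer $m\ge n$, the section $\bar\CC_m:=\bar\CC\cap\{g\le m\}$ is convex, uniformly bounded, and closed in probability, hence closed in $L^1$ by dominated convergence; since $g_0\notin\bar\CC_m$, Hahn--Banach separation in the duality $(L^1,L^{\infty})$ produces $f_m\in L^{\infty}$ with $\sup_{g\in\bar\CC_m}\EE[f_mg]<\EE[f_mg_0]$. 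Solidity of $\bar\CC$ allows me to replace $f_m$ by $f_m^+\in L^{\infty}_+$ without weakening the separation, and I rescale to $\sup_{g\in\bar\CC_m}\EE[f_mg]\le 1<\EE[f_mg_0]$.

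The main obstacle is the assembly step: each $f_m$ controls only $\bar\CC_m$, not all of $\bar\CC$, so pasting the sequence $(f_m)$ into a single $f\in L_+^0$ with $\EE[fg]\le 1$ for every $g\in\bar\CC$ requires delicate weighting. The strategy is to form a convex combination $f=\sum_m c_m f_m$ and calibrate the weights $c_m>0$ so that $f$ is a.s.\ finite, the inequality $\EE[fg_0]>1$ is preserved, and $\EE[fg]\le 1$ holds uniformly over $g\in\bar\CC$; the standard device here, used by Brannath and Schachermayer, is the preliminary observation that a convex, solid, closed subset of $L_+^0$ admits a strictly positive functional with finite supremum on it, which provides the scaling needed to tame the tails of unbounded $g\in\bar\CC$. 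Once such an $f$ is constructed, $f\in\bar\CC^{\circ}\subseteq\CC^{\circ}$ together with $\EE[fg_0]>1$ contradicts $g_0\in\CC^{\circ\circ}$, which yields the desired inclusion and completes the proof.
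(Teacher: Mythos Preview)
This theorem is not proved in the paper; it is quoted from \cite{bipolar} as a known result and used as a black box (notably in the proof of Lemma~\ref{stbyci}). There is therefore no proof in the present paper to compare your proposal against.

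Regarding your sketch on its own merits: the first inclusion and the reduction to bounded $g_0$ via truncation are fine, and Hahn--Banach separation of $g_0$ from the bounded sections $\bar\CC_m$ in the $(L^1,L^\infty)$ duality is the right start. The gap is exactly where you flag it, and your proposed device does not close it. The claim that ``a convex, solid, closed subset of $L_+^0$ admits a strictly positive functional with finite supremum on it'' is false in general: take $\bar\CC=L_+^0(\FF)$ itself on a nonatomic space, whose polar is $\{0\}$. What Brannath and Schachermayer actually do is establish a dichotomy: for a solid, convex, closed $\bar\CC$ there is a measurable partition $\Omega=\Omega_b\cup\Omega_u$ such that $\bar\CC$ restricted to $\Omega_b$ is bounded in probability (and there a strictly positive element of the polar exists, via a Yan--type exhaustion), while on $\Omega_u$ the set is \emph{hereditarily unbounded} and the bipolar equals all of $L_+^0$ restricted to $\Omega_u$. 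The assembly of a single separating $f\in\bar\CC^\circ$ is carried out only on $\Omega_b$; on $\Omega_u$ there is nothing to separate because $\bar\CC$ already contains everything there. Without this decomposition your weighting scheme $f=\sum_m c_m f_m$ cannot be made to work uniformly over unbounded $g\in\bar\CC$.
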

In our setting we would like to derive a similar theorem
in the context of stochastic processes. We fix a filtered
probability space $(\Omega,\FF,(\FFt)_{t\in [0,T]},\PP)$,
where $T>0$ is the {\bf time horizon}, and assume that
$(\FFt)_{t\in [0,T]}$ satisfies the {\em usual
conditions} with $\FF_0$ being the completed trivial
$\sigma$-algebra. We also introduce the following
notation and terminology for certain classes of
stochastic processes:
\begin{enumerate}
\item A nonnegative adapted c\` adl\` ag stochastic process
we will call
 a {\bf positive process} and we denote the set of all
  positive processes by $\PO$.
\item $\SU$ will denote the set of all supermartingales in $\PO$, and
  $\SUb$ the set of all supermartingales $Y$ in $\PO$ such that
  $Y_0\leq 1$.
\item A subset $\CC$ of $\SUb$ is called {\bf far-reaching} if there
  is an element $Y\in \CC$ such that $Y_T>0$ a.s.
\item $\DE$ denotes the set of all nonincreasing processes $B$ in
  $\PO$ such that $B_0\leq 1$.
\end{enumerate}

\begin{defn} Let $\CC$ be a subset of $\PO$.
The {\bf (process)-polar} of $\CC$ is the set of all positive
processes $Y$, such that $XY=(X_tY_t)_{t\in[0,T]}$ is a
supermartingale with $(XY)_0\leq 1$ for all $X\in \CC$. \end{defn}
\begin{rem} There is a formal analogy between our definition of
the polar and that for random variables in \cite{bipolar}. To show
how  $\PO$ is placed in duality with itself we first have to
define a suitable range space for the scalar product. Let
$\preceq_p$ be a binary relation on $\PO$ defined by
\[X\preceq_p Y\ \mbox{iff}\ X_0\leq Y_0\ \mbox{and}\ X-Y \ \mbox{is a
  supermartingale}.\] Defined as it is, $\preceq_p$ is not a partial
order. However, if we set $\RR$ to be the quotient space obtained
from $\PO$ by identifying processes whose difference is a local
martingale null at $0$, the natural projection $\preceq$ of
$\preceq_p$ to $\RR$ will define a partial order on $\RR$. If we
denote by $\FE$ the natural projection from $\PO$ onto $\RR$,  we
can see that a polar of a subset $\CC$ of $\PO$ is given by
\[ \CC^{\times}=\set{Y\in\PO\,:\, \FE[XY]\preceq 1\ \mbox{ for all
$X\in\CC$}}.\] \end{rem}

Our next task is to define analogues of solidity,
closedness and convexity. It turns out that the right
substitute for solidity is the following concept,
multiplicative in nature. We recall that $\DE$ stands for
the set of all nonincreasing processes $B$ in
  $\PO$ such that $B_0\leq 1$.
\begin{defn} Let $\CC$ be a subset of $\PO$. $\CC$ is called {\bf
    (process) solid} if for each $Y\in \CC$ and each $B\in\DE$ we have
  $YB\in\CC$.
\end{defn}

To define the appropriate notion of closedness, we recall
the concept of Fatou-convergence. It is an analogue of
convergence a.s. in the context of c\` adl\` ag processes
and was used for example in \cite{6}, \cite{7} and \mbox{
\cite{8}.}

\begin{defn}\label{FatD1}
Let $(Y^{(n)})_{n\in\N}$ be a sequence of positive processes.
 We say that $(Y^{(n)})_{n\in\N}$ {\bf Fatou converges}
to a positive process $Y$ if there is a countable dense subset $\TT$ of
$[0,T]$ such that
\begin{eqnarray}
 Y_t&=&\liminf_{s\downarrow t,s\in\TT}\liminf_n Y^{(n)}_s
\label{FatDef}\\\nonumber &=&\limsup_{s\downarrow
t,s\in\TT}\limsup_n Y^{(n)}_s
\end{eqnarray} for all t. We interpret $(\ref{FatDef})$ to mean
$Y_t=\lim_n Y^{(n)}_t$ for $t=T$.
\end{defn}
Fatou convergence has a number of desirable properties, especially
when applied to sequences in $\SU$. The following proposition is an easy
consequence of the Fatou Lemma:
\begin{prop}\label{subfsub}
Let $\Ynn$ be a sequence in $\SU$, Fatou converging to a positive process
$Y$. Then $Y$ is in $\SU$ as well. If additionally, $Y^{(n)}\in\SUb$ for
all $n$, then so is $Y$.
\end{prop}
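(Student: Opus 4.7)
The plan is to establish the supermartingale inequality $\EE[Y_t\mid \FF_s]\leq Y_s$ for $0\leq s<t\leq T$ by iterating the (conditional) Fatou lemma through the nested limits in the Fatou-convergence definition, and then exploiting right-continuity of the filtration to move the outer limit past a conditional expectation.

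First I would fix $s<t$, a time $u\in\TT$ with $s<u<t$, and a sequence $s_k\downarrow t$ in $\TT$ with $s_k>u$. From the Fatou-convergence definition we have $Y_t\leq \liminf_k \liminf_n Y^{(n)}_{s_k}$ a.s., and applying the conditional Fatou lemma (valid by nonnegativity) twice, together with the supermartingale property of each $Y^{(n)}$, yields
\[
\EE[Y_t\mid\FF_u] \leq \liminf_k \liminf_n \EE[Y^{(n)}_{s_k}\mid\FF_u]\leq \liminf_n Y^{(n)}_u \quad\text{a.s.}
\]
Since $\TT$ is countable, the union of exception sets over $u\in\TT\cap(s,t)$ is still null, so this inequality holds simultaneously in $u$ outside a single null set $N$.

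Next I would let $u\downarrow s$ along $\TT$. By right-continuity of the filtration (part of the usual conditions) and martingale convergence, the left side converges a.s. to $\EE[Y_t\mid\FF_{s^+}]=\EE[Y_t\mid\FF_s]$. On the right, for each $\omega\notin N$ and each sequence $u_j\downarrow s$ in $\TT$, we obtain $\EE[Y_t\mid\FF_s](\omega)\leq \liminf_j \liminf_n Y^{(n)}_{u_j}(\omega)$. Taking the pointwise infimum over all such sequences — which, by the standard characterization of the liminf of a function along a net in terms of sequential liminfs, equals $\liminf_{u\downarrow s,\,u\in\TT}\liminf_n Y^{(n)}_u = Y_s$ — gives $\EE[Y_t\mid\FF_s]\leq Y_s$ a.s. For the $\SUb$ statement, a single application of Fatou shows $\EE[Y_0]\leq \liminf_n \EE[Y^{(n)}_0]\leq 1$; since $\FF_0$ is the completed trivial $\sigma$-algebra, $Y_0$ is a.s. constant, hence $Y_0\leq 1$ a.s.

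The main obstacle is the second step: the exceptional-sequence choice in the definition of $Y_s$ is a priori $\omega$-dependent, and one must be careful not to let the null set swallow uncountably many different sequences. The fix is bookkeeping — union the null sets over the countable index set $\TT\cap(s,t)$ \emph{before} taking the pointwise infimum over sequences, so the final infimum is a purely deterministic operation on each sample path. A secondary nuisance is that for martingale convergence on the left we nominally need $Y_t$ to be integrable; this is resolved by first running the entire argument with $Y_t$ replaced by the bounded truncation $Y_t\wedge M$, invoking conditional monotone convergence as $M\to\infty$, and then specializing to $s=0$ to obtain $\EE[Y_t]\leq Y_0<\infty$, which retrospectively justifies integrability.
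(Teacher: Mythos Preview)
Your argument is essentially correct, and it supplies exactly the details the paper leaves out: there the proposition is only called ``an easy consequence of the Fatou Lemma'' and no proof is written. Two comments on the execution.

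First, the ``main obstacle'' you identify disappears if you exploit the $\limsup$ half of the Fatou-convergence identity rather than the $\liminf$ half on the right. Fix a \emph{single} sequence $u_j\downarrow s$ in $\TT$. Backward martingale convergence along this one sequence gives $\EE[Y_t\mid\FF_{u_j}]\to\EE[Y_t\mid\FF_s]$ a.s.\ (one null set), and combining with $\EE[Y_t\mid\FF_{u_j}]\le\liminf_n Y^{(n)}_{u_j}$ yields
\[
\EE[Y_t\mid\FF_s]\ \le\ \liminf_j\liminf_n Y^{(n)}_{u_j}\ \le\ \limsup_j\limsup_n Y^{(n)}_{u_j}\ \le\ \limsup_{u\downarrow s,\,u\in\TT}\limsup_n Y^{(n)}_u\ =\ Y_s.
\]
No infimum over all sequences is needed, so the worry about $\omega$-dependent minimizing sequences never arises. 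If you prefer the $\liminf$ route, note that your bookkeeping fix handles the \emph{right}-hand side, but the a.s.\ convergence of the \emph{left}-hand side along $u_j\downarrow s$ also has a sequence-dependent null set; the clean repair there is to replace $u\mapsto\EE[Y_t\mid\FF_u]$ by its c\`adl\`ag modification before taking the pathwise right limit.

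Second, the $\SUb$ clause needs one more beat: the definition gives $Y_0=\liminf_{s\downarrow 0,\,s\in\TT}\liminf_n Y^{(n)}_s$, not $\liminf_n Y^{(n)}_0$, so a direct Fatou at $t=0$ is not quite what you want. Fix $s_k\downarrow 0$ in $\TT$; then $Y_0\le\liminf_k\liminf_n Y^{(n)}_{s_k}$, and since $\FF_0$ is trivial, taking expectations and using Fatou together with the supermartingale bound $\EE[Y^{(n)}_{s_k}]\le Y^{(n)}_0\le 1$ gives $Y_0\le 1$.
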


\begin{defn} Let $\CC$ be a subset of $\PO$. $\CC$ is called {\bf
    closed} if it is closed with respect to Fatou convergence.
\end{defn}

Finally, we define the concept of {\bf fork-convexity}
for subsets of $\SU$. We want to look at processes in
$\SU$ as built up of multiplicative increments. In order
to be able do this we have to make sure that these
increments are well-defined. We refer the reader to
(\cite{5}, Prop. II.3.4, page 66) for the proof of the
following proposition.
\begin{prop} \label{yor} If $X$ is a nonnegative right-continuous
supermartingale and
\[ T=\inf\set{t\,:\, X_t=0}\wedge\inf\set{t>0\,:\, X_{t-}=0},\]
then, for almost every $\omega\in\Omega$, $X_{\cdot}(\omega)$
vanishes on $[T(\omega),\infty)$. \end{prop}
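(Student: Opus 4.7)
The plan is to approximate $T$ from below by stopping times along which $X$ is forced close to zero, and then push this smallness past $T$ via the optional sampling theorem for nonnegative supermartingales. Concretely, for each $n\geq 1$ I would set $S_n:=\inf\{t\geq 0\,:\,X_t<1/n\}$; right-continuity of $X$ together with the usual conditions on the filtration makes each $S_n$ a stopping time, and right-continuity of the paths gives $X_{S_n}\leq 1/n$ on $\{S_n<\infty\}$.

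The first key step is to verify that $S_n\uparrow T$ almost surely. The inequality $S_n\leq T$ splits along the two defining ingredients of $T$: if $X_T=0$ with $T<\infty$, then $X_T<1/n$ and so $S_n\leq T$; if instead $X_{T-}=0$ with $T>0$, left limits furnish some $s<T$ with $X_s<1/n$. For the reverse inequality, let $S:=\lim_n S_n$; on $\{S<\infty\}$ either $S_n=S$ eventually (forcing $X_S=\lim X_{S_n}=0$), or $S_n<S$ strictly for infinitely many $n$ (so that $X_{S-}=\lim X_{S_n}=0$ by the c\`adl\`ag left limit at $S$). In either case the definition of $T$ gives $T\leq S$.

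With the approximation in place, fix $t\geq 0$ and apply optional sampling to the bounded stopping times $S_n\wedge t\leq t$. Since $\{S_n\leq t\}\in\FF_{S_n\wedge t}$ and $X_{S_n\wedge t}=X_{S_n}$ on this event,
\[
\EE[X_t\,\one_{\{S_n\leq t\}}]\leq\EE[X_{S_n\wedge t}\,\one_{\{S_n\leq t\}}]=\EE[X_{S_n}\,\one_{\{S_n\leq t\}}]\leq\tfrac{1}{n}.
\]
Because $(S_n)$ is increasing with limit $T$, the indicators $\one_{\{S_n\leq t\}}$ decrease to $\one_{\{T\leq t\}}$, so dominated convergence (with the integrable dominant $X_t$) yields $\EE[X_t\,\one_{\{T\leq t\}}]=0$, i.e.\ $X_t=0$ a.s.\ on $\{T\leq t\}$. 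Running this simultaneously over $t$ in a countable dense set $\TT\subset[0,\infty)$ and then invoking right-continuity of $X$ (pick, for arbitrary $u\geq T(\omega)$, a sequence $t_k\in\TT$ with $t_k\downarrow u$ and $t_k\geq T(\omega)$, so that $X_{t_k}(\omega)=0$ forces $X_u(\omega)=0$) produces a single null set off of which $X_\cdot(\omega)$ vanishes on $[T(\omega),\infty)$.

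The main obstacle I expect is the second step: verifying $S_n\uparrow T$ exactly requires respecting both defining pieces of $T$ simultaneously, and the reverse inequality demands the split into ``$(S_n)$ reaches its limit'' versus ``approaches strictly from below,'' since only the latter situation puts the c\`adl\`ag left limit of $X$ at one's disposal. Once that identification is in hand, the remainder is a clean optional-sampling-plus-dominated-convergence argument followed by a routine countable-dense extension.
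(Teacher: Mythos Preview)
Your argument is correct and is essentially the classical proof. Note, however, that the paper does not supply its own proof of this proposition: it simply refers the reader to Revuz--Yor, \emph{Continuous Martingales and Brownian Motion}, Prop.~II.3.4, p.~66, where the same approximation-by-hitting-times idea (stop when $X$ drops below a small level, apply optional sampling, and let the level tend to zero) is carried out. So there is no independent ``paper proof'' to compare against; your write-up is in line with the cited source.

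One small remark on your case split for $S_n\le T$: you phrase it as ``$X_T=0$'' versus ``$X_{T-}=0$ with $T>0$,'' which is fine but deserves a word of justification that these alternatives are exhaustive when $T<\infty$. If $T=\inf\{t:X_t=0\}$ is the active infimum, right-continuity forces $X_T=0$; if instead $T=\inf\{t>0:X_{t-}=0\}<\inf\{t:X_t=0\}$, a short argument (pick $t_k\downarrow T$ with $X_{t_k-}=0$ and, inside each $(T,t_k)$, points where $X$ is small) shows either $X_T=0$ or $X_{T-}=0$. This is routine, but since you flag the identification $S_n\uparrow T$ as the delicate step, it is worth making explicit.
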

This result, together with the convention that
$\frac{0}{0}=0$ (which we freely use throughout the paper) allows us to
define random variables of the form $\frac{Y_t}{Y_s}$ for
$Y\in\SU$ and $t\geq s$.

\begin{defn}
A subset $\CC$ of $\SU$ is called {\bf fork-convex} if
for any $s\in [0,T]$, any $h\in L^0_+(\FF_s)$ with $h\leq 1$ a.s. and any
$Y^{(1)},Y^{(2)},Y^{(3)}\in\CC$, the process $Y$, defined by
\[ Y_t=\left\{
\begin{array}
{cl} Y^{(1)}_t& t<s \\
Y^{(1)}_s\left(
h\frac{Y^{(2)}_t}{Y^{(2)}_s}+(1-h)\frac{Y^{(3)}_t}{Y^{(3)}_s}
\right) & t\geq s
\end{array}\right.
\]
belongs to $\CC$.
\end{defn}
\begin{rem} The motivation for the introduction of fork-convexity
comes from mathematical finance. It can easily be shown that the set of
density processes of equivalent local martingale measures for a
semimartingale $S$ is
fork-convex. By {\em density process} of a probability measure $\QQ$ equivalent
to
$\PP$ we intend the c\` adl\` ag version of the martingale $Y^{\QQ}_t=\et{\frac{d\QQ}{d\PP}}$.
We refer the reader to \cite{7} for the related concept of {\em
predictable convexity}.
\end{rem}

Now we can state the main result of this paper.
\begin{thm}\label{fbt}{\em [\bf Filtered Bipolar Theorem]\em} Let $\CC$
be a far-reaching subset of $\SUb$. The process bipolar
$\CC^{\times\times}=(\CC^{\times})^{\times}$ of $\CC$ is
the smallest closed, fork-convex and solid subset of
$\SUb$ containing $\CC$. \end{thm}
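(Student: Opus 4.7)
The plan is to follow the standard bipolar template: show that the bipolar $\CC^{\times\times}$ already enjoys the three closure properties and contains $\CC$, and then prove that any closed, fork-convex, solid subset of $\SUb$ containing $\CC$ must contain $\CC^{\times\times}$. The inclusion $\CC\subseteq\CC^{\times\times}$ is immediate from the definition of $\CC^\times$. Solidity of $\CC^{\times\times}$ follows from integration by parts: if $Y\in\CC^{\times\times}$, $B\in\DE$ and $Z\in\CC^\times$, then $YZ$ is a supermartingale and multiplication by the nonincreasing right-continuous $B$ preserves the supermartingale property while keeping $(YZB)_0\leq 1$. For fork-convexity, I would verify the supermartingale property of $YZ$ on the three regions $t_1\leq t_2<s$, $t_1<s\leq t_2$, and $s\leq t_1\leq t_2$ by inserting $\FF_s$ between the conditionings and using the supermartingale property of each $Y^{(i)}Z$ together with $\FF_s$-measurability of $Y^{(i)}_s$ and $h$; the convention $0/0=0$ combined with Proposition~\ref{yor} keeps the quotients well defined. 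Closedness under Fatou convergence is handled by applying Proposition~\ref{subfsub} to the sequence $(Y^{(n)}Z)_{n\in\N}$, which Fatou converges to $YZ$ whenever $Y^{(n)}\to Y$ does.

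For the hard inclusion, let $\bar\CC$ denote the smallest closed, fork-convex, solid subset of $\SUb$ containing $\CC$ and suppose, toward a contradiction, that some $Y^\ast\in\CC^{\times\times}\setminus\bar\CC$ exists. The plan is to manufacture a separating process $Z^\ast\in\CC^\times$ with $Z^\ast Y^\ast$ failing to be a supermartingale-of-norm-$\leq 1$, which contradicts $Y^\ast\in\CC^{\times\times}$. The construction is pathwise in time and uses the conditional Bipolar Theorem mentioned in the abstract as the local separation device. Concretely, I would fix a partition $0=t_0<t_1<\dots<t_n=T$ and, working inductively in $k$, apply the conditional version of Theorem~\ref{bp} at time $t_{k-1}$ to the conditional slice
\[
\CC_k=\bigl\{\,Y_{t_k}/Y_{t_{k-1}}\,:\,Y\in\bar\CC\bigr\}\subseteq L^0_+(\FF_{t_k}),
\]
which is solid (by multiplication by the step function $B=\one_{[0,t_{k-1})}+h\one_{[t_{k-1},T]}$ living in $\DE$), convex conditionally on $\FF_{t_{k-1}}$ (by fork-convexity with $s=t_{k-1}$), and closed in probability (by Fatou-closedness). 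This yields, on a $\PP$-full set, an $\FF_{t_k}$-measurable $\zeta_k\geq 0$ with $\EE[\zeta_k\,Y_{t_k}/Y_{t_{k-1}}\mid\FF_{t_{k-1}}]\leq 1$ for all $Y\in\bar\CC$ and $\EE[\zeta_k\,Y^\ast_{t_k}/Y^\ast_{t_{k-1}}\mid\FF_{t_{k-1}}]>1$ on a set of positive measure, at least for some refined partition that detects the separation between $Y^\ast$ and $\bar\CC$.

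I would then assemble the discrete-time separator $Z^{\ast,n}_{t_k}=\prod_{j=1}^{k}\zeta_j$, interpolate to a c\`adl\`ag process on $[0,T]$ by taking $Z^{\ast,n}_t=\et{Z^{\ast,n}_{t_{k}}}$ on $[t_{k-1},t_k)$, and verify directly that $Z^{\ast,n}\in\CC^\times$: the supermartingale property of $Z^{\ast,n}Y$ for $Y\in\bar\CC$ reduces via the tower property to the one-step estimate above, and the telescoping $(Z^{\ast,n}Y)_0\leq 1$ is automatic from $Z^{\ast,n}_0\leq 1$ and $Y_0\leq 1$. The final step is a compactness-and-limit argument: by a Koml\'os/Fatou-type extraction (this is where the far-reaching hypothesis and the boundedness $(Z^{\ast,n}Y)_0\leq 1$ are used to keep the sequence from blowing up), pass to a refining sequence of partitions and extract a Fatou limit $Z^\ast$, which again lies in $\CC^\times$ by the easy arguments, and for which $Z^\ast Y^\ast$ fails the defining inequality of $\CC^{\times\times}$, producing the desired contradiction.

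The main obstacle is the last step: carrying the pointwise-in-time, $\omega$-wise conditional separators through a limit that is simultaneously in the mesh of the partition and in the extracted subsequence, while preserving both membership in $\CC^\times$ and the strict separation from $Y^\ast$. This is where fork-convexity does the crucial work, since it is precisely what makes the family $\CC_k$ conditionally convex and hence amenable to the conditional Bipolar Theorem, and it is also what ensures that $Z^\ast$ obtained by a limiting refinement can be realized inside $\CC^\times$ rather than merely in some larger dual set. The far-reaching assumption enters to guarantee the existence of a reference element with $Y_T>0$ a.s., which is needed to control denominators $Y_{t_{k-1}}$ and to rule out degenerate trivial separators.
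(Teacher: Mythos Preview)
Your treatment of the easy direction --- that $\CC^{\times\times}$ is closed, fork-convex, solid, and contains $\CC$ --- is fine and matches the paper's Proposition~\ref{one}. The problem is the hard inclusion.

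Your separation argument is circular. You assume $Y^\ast\in\CC^{\times\times}\setminus\bar\CC$ and then claim that for some sufficiently fine partition there is a step $k$ and a conditional separator $\zeta_k$ with $\EE[\zeta_k\,Y^\ast_{t_k}/Y^\ast_{t_{k-1}}\mid\FF_{t_{k-1}}]>1$ on a set of positive measure. But this is exactly what cannot happen. The paper's Lemma~5 (proved via the construction in Lemma~\ref{cadl}) shows that $(\CC^\times)_{t_k,t_{k-1}}=[\CC_{t_k,t_{k-1}}\mid\FF_{t_{k-1}}]^\circ$, and hence, by the Conditional Bipolar Theorem, $(\CC^{\times\times})_{t_k,t_{k-1}}$ equals the $\FF_{t_{k-1}}$-conditional bipolar of $\CC_{t_k,t_{k-1}}$. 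Since $Y^\ast\in\CC^{\times\times}$, every increment $Y^\ast_{t_k}/Y^\ast_{t_{k-1}}$ already lies in that conditional bipolar, so no $\zeta_k$ separating it from $\bar\CC_{t_k,t_{k-1}}$ can exist, at any partition. In short: ``$Y^\ast\in\CC^{\times\times}$'' means precisely that every slice of $Y^\ast$ passes every conditional test you could build from $\CC$, and your proposed separator is exactly such a test. The assertion ``at least for some refined partition that detects the separation'' is the whole theorem, not a step one can take for granted.

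The paper's route is the opposite, and constructive. It does not try to separate; it approximates. Using the identity $(\CC^{\times\times})_{t_k,t_{k-1}}=\overline{(\oll{\CC}_{t_k,t_{k-1}})}$ (closure in probability of the increments of the fork-convex solid hull $\oll{\CC}$), one finds for each finite partition a sequence in $\oll{\CC}$ whose increments approximate those of $Y^\ast$; fork-convexity is used to patch the increment approximations into a single process. A diagonal argument over dyadic partitions then produces a sequence in $\oll{\CC}$ converging to $Y^\ast$ at every dyadic time, which is Fatou convergence by right-continuity of $Y^\ast$. Closedness of $\bar\CC$ then gives $Y^\ast\in\bar\CC$. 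The far-reaching hypothesis enters in Lemma~\ref{cadl}, where it is used to divide by a strictly positive supermartingale and obtain the c\`adl\`ag modification needed to identify $(\CC^\times)_{t_2,t_1}$ with the conditional polar of $\CC_{t_2,t_1}$; it is not a tool for controlling a Koml\'os extraction of separators.
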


An important ingredient in the proof of our main result
is the {\bf Conditional Bipolar Theorem}. This
conditional version may be interpreted as the Filtered
Bipolar Theorem in the setting of a discrete two-element
time set. Before stating the theorem, we give the
necessary definitions.
\begin{defn} Let $({\Omega,\FF,\PP})$ be a probability space and let
$\GG$ be
a sub-$\sigma$-algebra of $\FF$. A subset $\CC$ of $L_+^0(\FF)$ is
called
$\GG$-{\bf convex} if for all $f,g\in\CC$ and every $h\in L_+^0(\GG)$ with
$h\leq 1$ a.s., we have $hf+(1-h)g\in\CC$.
\end{defn}
\begin{defn} Let $({\Omega,\FF,\PP})$ be a probability space and let
$\GG$ be a sub-$\sigma$-algebra of $\FF$. For a subset $\CC$ of
$L_+^0(\FF)$ the set
\[ \ccp=\set{ g\in\lf\,:\, \EE[fg|\GG]\leq 1, \mbox{ a.s.}\ \mbox{for all}\
\, f\in \CC}.\]
is called the {\bf conditional polar of} $\CC$ {\bf with respect to}
$\GG$.
\end{defn}

We also recall the definition of boundedness in probability.
\begin{defn} A subset $\CC\in\lf$ is said to be {\bf bounded in
probability} if for each $\ep>0$ there is an $M>0$ such that
\[ \PP[f>M]<\ep\ \mbox{for all}\ f\in\CC.\]
\end{defn}
\begin{rem} We will use the following easy consequence of boundedness in probability:
Let $(f_n)_{n\in\N}$ be a sequence in $L^0_+$ converging a.s. to a random variable
$f$ with values in $[0,\infty]$. If $(f_n)_{n\in\N}$ is bounded in probability, then
so is $\set{f}\cup\set{f_n\,:\,n\in\N}$ and $f<\infty$ a.s.
\end{rem}
\begin{thm}\label{cbt}{\em [\bf  Conditional Bipolar Theorem]\em} Let
$\CC$ be a subset of $\lf$ which is bounded in probability. Then $\ccpp=\ccpccp$
is  the smallest $\GG$-convex,
solid and closed subset of $\lf$ containing $\CC$.
\end{thm}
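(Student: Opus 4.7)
The plan is to follow the standard two-step bipolar pattern. In the first step I verify that $\ccpp$ itself contains $\CC$ and is $\GG$-convex, solid, and closed. Containment is immediate from the definition of $\ccp$; solidity and $\GG$-convexity both follow from the monotonicity and $\GG$-linearity of conditional expectation (for $\GG$-convexity the $\GG$-measurable weight $h\leq 1$ factors out, giving $h\cdot 1+(1-h)\cdot 1=1$); and closedness in probability is a one-line application of the conditional Fatou lemma along an a.s.\ convergent subsequence. For the minimality, let $\tilde\CC$ be any closed, solid and $\GG$-convex set containing $\CC$; I claim $\ccp = [\tilde\CC|\GG]^{\circ}$, so that $\ccpp = [\tilde\CC|\GG]^{\circ\circ}$. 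Indeed, for any fixed $g\in\lf$ the set $\{f\in\lf : \EE[fg\,|\,\GG] \leq 1 \text{ a.s.}\}$ is itself closed, solid and $\GG$-convex (by the same verification as above), so once it contains $\CC$ it contains $\tilde\CC$; applied with $g\in\ccp$ this gives the nontrivial inclusion $\ccp\subseteq [\tilde\CC|\GG]^{\circ}$. Thus it is enough to prove $\ccpp\subseteq\CC$ under the extra assumption that $\CC$ is already closed, solid, and $\GG$-convex.

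The core step then bootstraps the unconditional Brannath--Schachermayer theorem (Theorem~\ref{bp}) to the conditional setting. Taking the $\GG$-measurable constant $h\equiv 1/2$ in the definition of $\GG$-convexity shows that $\CC$ is in particular convex, so Theorem~\ref{bp} gives $\CC = \CC^{\circ\circ}$ in the unconditional sense. It therefore suffices to fix $f\in\ccpp$ and any unconditional separator $\tilde g\in\CC^{\circ}$ and establish $\EE[f\tilde g]\leq 1$. The key construction is the $\GG$-measurable essential supremum
\[ \alpha := \esssup\{\,\EE[f'\tilde g\,|\,\GG] : f' \in \CC\,\}. \]
Using $\GG$-convexity with $h=\inds A$ for the $\GG$-set $A := \{\EE[f'_1\tilde g|\GG]\geq \EE[f'_2\tilde g|\GG]\}$, the combination $\inds A f'_1 + (1-\inds A)f'_2 \in\CC$ realises the pointwise maximum of any two members of the family, so the family is upward directed. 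Hence $\alpha$ is the a.s.\ increasing limit of $\EE[f'_n\tilde g|\GG]$ along some sequence $(f'_n)\subset\CC$, and monotone convergence yields $\EE[\alpha] = \lim_n \EE[f'_n\tilde g]\leq 1$; in particular $\alpha<\infty$ a.s.

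With $\alpha$ in hand, for each $\epsilon>0$ I rescale to $g_\epsilon := \tilde g/(\alpha+\epsilon)$. Since $\alpha+\epsilon$ is strictly positive, finite and $\GG$-measurable, it factors through the conditional expectation, so
\[ \EE[f' g_\epsilon\,|\,\GG] = \frac{\EE[f'\tilde g|\GG]}{\alpha+\epsilon} \leq \frac{\alpha}{\alpha+\epsilon} \leq 1 \qquad \text{for every } f'\in\CC, \]
which shows $g_\epsilon\in\ccp$. Using $f\in\ccpp$ then gives $\EE[f\tilde g\,|\,\GG]\leq \alpha+\epsilon$; integrating and letting $\epsilon\downarrow 0$ produces $\EE[f\tilde g]\leq 1$, so $f\in\CC^{\circ\circ}=\CC$, as required. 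The main obstacle is the construction and control of $\alpha$: $\GG$-convexity of $\CC$ is exactly what buys the upward directedness needed for the monotone-convergence bound $\EE[\alpha]\leq 1$, and the boundedness-in-probability hypothesis enters only implicitly, ensuring that the unconditional polar $\CC^{\circ}$ delivered by Theorem~\ref{bp} is rich enough to produce nontrivial separators $\tilde g$ to rescale.
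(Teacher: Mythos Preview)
Your argument is correct and takes a genuinely different, more streamlined route than the paper. The paper proceeds through a chain of lemmata: it first expresses $\ccpp$ as $\bigcap_{l\in\blp}\frac{1}{l}\overline{\overline{(\bigcup_{k\in\blp} k\CC)}}$ (Lemma~\ref{stbyci}), then invokes Koml\'os' theorem to show that this closed convex solid hull factors as $\bigcup_{k\in\blp} k\,\overline{\overline{\CC}}^{\GG}$ (Lemma~\ref{secondpolar}), and finally runs a maximisation argument over the set $H^f=\{h\in\CC: f\eg{h}=h\eg{f}\}$ to exhibit a dominating element. Your approach bypasses all of this: once $\CC$ is closed, solid and $\GG$-convex (hence convex), Theorem~\ref{bp} gives $\CC=\CC^{\circ\circ}$ directly, and your rescaling $g_\epsilon=\tilde g/(\alpha+\epsilon)$ converts each unconditional separator $\tilde g\in\CC^{\circ}$ into a conditional one. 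The upward-directedness of $\{\EE[f'\tilde g\,|\,\GG]:f'\in\CC\}$ via $\GG$-convexity is the same device the paper uses for $H^f$, but you apply it to a simpler family.

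Two remarks. First, your closing comment about boundedness in probability is not quite accurate: Theorem~\ref{bp} carries no such hypothesis, and a careful reading of your argument shows you never use it either---so your proof actually establishes the theorem without that assumption, which is a mild strengthening. (The paper needs it in Lemma~\ref{secondpolar} to ensure the Koml\'os-limit $\hat h$ is finite and in the $H^f$ argument.) Second, the phrase ``taking $h\equiv 1/2$ shows $\CC$ is convex'' should read ``taking $h$ to be any constant in $[0,1]$''; this is cosmetic. What the paper's longer route buys is the structural factorisation of Remark~\ref{17} (every $fl$ with $f\in\ccpp$, $l\in\blp$ splits as $hk$ with $h\in\overline{\overline{\CC}}^{\GG}$, $k\in\blp$), which may be of independent use but is not needed for the bipolar statement itself.
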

\section{PROOFS OF THE THEOREMS}

We start this section by proving the Conditional Bipolar
Theorem via a number of auxiliary lemmata. $\blp$ denotes
the set of all nonnegative $\GG$-measurable functions on
$\Omega$ with expectation less then or equal to $1$. In
other words, $\blp$ is the intersection of $\lf$ with the
unit ball of $L^1(\GG)$.

\begin{lem}\label{jc} For $\CC\subseteq\lf$, $\ccp$ is solid,
$\GG$-convex and closed.
\end{lem}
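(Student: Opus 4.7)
The plan is to verify the three properties directly from the definition of the conditional polar, exploiting the $\GG$-linearity and monotonicity of conditional expectation, together with a subsequence argument and the conditional Fatou lemma for closedness.

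First I would dispatch solidity: if $g \in \ccp$ and $g' \in \lf$ with $g' \leq g$ a.s., then for every $f \in \CC$, monotonicity of the conditional expectation yields $\EE[fg' \mid \GG] \leq \EE[fg \mid \GG] \leq 1$ a.s., so $g' \in \ccp$. Next, for $\GG$-convexity, take $g_1, g_2 \in \ccp$ and $h \in L^0_+(\GG)$ with $h \leq 1$ a.s.; since $h$ and $1-h$ are $\GG$-measurable and nonnegative,
\[
\EE[f(hg_1+(1-h)g_2)\mid\GG] \;=\; h\,\EE[fg_1\mid\GG]+(1-h)\,\EE[fg_2\mid\GG] \;\leq\; h+(1-h)=1
\]
a.s.\ for every $f \in \CC$, so $hg_1+(1-h)g_2 \in \ccp$.

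For closedness, I would take a sequence $(g_n)$ in $\ccp$ converging in probability to some $g \in \lf$, and show $g \in \ccp$. Since convergence in probability implies a.s.\ convergence along a subsequence, pick $(g_{n_k})$ with $g_{n_k}\to g$ a.s. For an arbitrary $f \in \CC$ we have $fg_{n_k}\to fg$ a.s.\ with all terms nonnegative, and since each $\EE[fg_{n_k}\mid\GG]\leq 1$ a.s., the conditional Fatou lemma gives
\[
\EE[fg\mid\GG] \;\leq\; \liminf_k \EE[fg_{n_k}\mid\GG] \;\leq\; 1 \quad \text{a.s.}
\]
This holds for every $f\in\CC$, so $g\in\ccp$.

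No real obstacle is expected here; the only subtlety is remembering to pass to an a.s.\ convergent subsequence before invoking conditional Fatou, since the closedness in the definition refers to the topology of convergence in probability rather than a.s.\ convergence. Everything else is a one-line verification.
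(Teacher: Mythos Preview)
Your proposal is correct and matches the paper's own proof essentially line for line: the paper also passes to an a.s.\ convergent subsequence and invokes the conditional Fatou lemma for closedness, and dismisses solidity and $\GG$-convexity as following easily from the definition, which is exactly what you spell out.
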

\begin{proof}
Let $(g_n)_{n\in\N}$ be a sequence in $\ccp$ converging in probability to
$g\in\lf$. By passing to a subsequence we can assume $g_n\ra g$
a.s. By the Conditional Fatou Lemma, for every $f\in \CC$,
\[ \eg{fg}=\eg{\lim_n fg_n}\leq \liminf_n \eg{fg_n}\leq 1\
\mbox{a.s.},\] so $g\in\ccp$, i.e. $\ccp$ is closed.
$\GG$-convexity and solidity follow easily from the definition.
\end{proof}
 For $f\in \lf$ and $\CC\subseteq\lf$ we put
$f\CC=\set{fg\,:\, g\in\CC}$ and
$\frac{1}{f}\CC=\set{h\in\lf\,:\, hf\in\CC}$.
\begin{lem}\label{stbyci}
Let $\CC\subseteq\lf$. Then
\[ \ccp=\bigcap_{l\in\blp} \frac{1}{l} {\CC}^{\circ},\] and
\[ \ccpp=
\bigcap_{l\in\blp}\frac{1}{l}\overline{\overline{(\bigcup_{k\in\blp}
k\CC)}}\]
where $\CC^{\circ}=\set{g\in\lf\,:\, \EE[fg]\leq 1\ \forall\, f\in
\CC}$ is the (unconditional) polar of $\CC$ and $\overline{\overline{(\ \ \ )}}$
denotes the convex,
solid and closed hull.
\end{lem}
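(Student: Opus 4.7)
The plan is to derive both identities from a single reduction step (duality between a.s.\ inequalities and integral tests against $\GG$-measurable densities) together with one application of the unconditional Bipolar Theorem of Brannath and Schachermayer (Theorem \ref{bp}).

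\emph{First identity.} The key ingredient is the standard characterization: for every $\phi\in\lg$,
\[ \phi\leq 1\ \text{a.s.} \iff \EE[l\phi]\leq 1\ \text{for every } l\in\blp. \]
The nontrivial direction is handled by testing against $l=\one_A/\PP(A)$ on the set $A=\{\phi>1\}$, which would force $\EE[l\phi]>1$ whenever $\PP(A)>0$. I apply this to $\phi=\EE[fg|\GG]$ (well defined in $[0,\infty]$ since $fg\geq 0$) and use the tower identity $\EE[l\EE[fg|\GG]]=\EE[lfg]$, valid for any nonnegative $\GG$-measurable $l$. The defining condition $g\in\ccp$ then becomes: $\EE[(lg)f]\leq 1$ for every $f\in\CC$ and every $l\in\blp$, which is exactly $lg\in\CC^{\circ}$ for every $l\in\blp$, i.e., $g\in\frac{1}{l}\CC^{\circ}$ for every $l$.

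\emph{Second identity.} Set $\DD=\bigcup_{k\in\blp}k\CC$. Reading the same computation the other way around, $g\in\ccp$ is equivalent to $\EE[hg]\leq 1$ for every $h\in\DD$, i.e., $\ccp=\DD^{\circ}$ (unconditional polar). Hence the unconditional bipolar $(\ccp)^{\circ}=\DD^{\circ\circ}$ is, by Theorem \ref{bp}, the smallest closed, solid, convex subset of $\lf$ containing $\DD$, namely $\overline{\overline{\DD}}$. Applying the first identity with $\CC$ replaced by $\ccp$ (so that $\ccpp=\bigcap_{l\in\blp}\frac{1}{l}(\ccp)^{\circ}$) yields
\[ \ccpp=\bigcap_{l\in\blp}\frac{1}{l}\overline{\overline{\bigcup_{k\in\blp}k\CC}}, \]
as claimed.

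\emph{Main obstacle.} No serious difficulties are anticipated; the only point requiring care is making sure that the tower identity and the a.s.\ versus integral duality remain valid for nonnegative, possibly unbounded (or infinite-valued) integrands, which is standard. Once that is in place, the entire argument is a clean rearrangement that converts the conditional statement into a family of unconditional polar statements and then invokes the Brannath--Schachermayer Bipolar Theorem exactly once.
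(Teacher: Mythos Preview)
Your proof is correct and follows essentially the same route as the paper: both reduce the a.s.\ inequality $\EE[fg|\GG]\leq 1$ to the family of integral tests $\EE[lfg]\leq 1$ over $l\in\blp$, interpret this as an unconditional polar condition, and then invoke Theorem~\ref{bp} once. Your identification $\ccp=\DD^{\circ}$ with $\DD=\bigcup_{k\in\blp}k\CC$ is a slightly more direct phrasing of what the paper obtains via the relations $(k\CC)^{\circ}=\frac{1}{k}\CC^{\circ}$ and $(\bigcup_{\alpha}\CC_{\alpha})^{\circ}=\bigcap_{\alpha}\CC_{\alpha}^{\circ}$, but the argument is the same.
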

\begin{proof}
We observe that for $Z\in\lf$, $\eg{Z}\leq 1$ if and only
if $\EE[Zl]\leq 1$ for all $l\in\blp$ (the easy proof is
left to the reader). Then
\begin{eqnarray*}
\ccp&=&\set{g\in\lf\,:\, \eg{fg}\leq 1,\,\ \forall f\in \CC}\\ &=&
\set{g\in\lf\,:\, \EE[{fgl}]\leq 1,\,\ \forall f\in \CC,\,\ \forall\,\,
l\in\blp}\\ &=& \bigcap_{l\in\blp} \set{g\in\lf\,:\, \EE[fgl]\leq
1,\ \forall f\in \CC}\\&=& \bigcap_{l\in\blp} \frac{1}{l}\CC^\circ.
\end{eqnarray*}
IF we reiterate the same procedure and use the following
simple relations
\[ (k\CC)^\circ=\frac{1}{k} \CC^\circ\ \mbox{for any}\ k\in\lf,\ \mbox{and
any}\  \CC\subseteq\lf\]
and
\[ (\bigcup_{\alpha\in I} \CC_\alpha)^\circ= \bigcap_{\alpha\in I}
\CC_\alpha^\circ\ \mbox{ for any
family}\ (\CC_\alpha)_{\alpha\in I} \ \mbox{of subsets of}\ \lf,\]
we obtain \begin{eqnarray*}
\ccpp&=&\bigcap_{l\in\blp} \frac{1}{l} (\ccp)^{\circ} =
\bigcap_{l\in\blp} \frac{1}{l} (\bigcap_{k\in\blp} \frac{1}{k}
\CC^\circ) ^{\circ}\\ &=& \bigcap_{l\in\blp} \frac{1}{l}
(\bigcup_{k\in\blp} k\CC)^{\circ\circ} =\bigcap_{l\in\blp}
\frac{1}{l} \oll{(\bigcup_{k\in\blp} k\CC)},\end{eqnarray*} by the
(unconditional) Bipolar Theorem \ref{bp}.
\end{proof}
\begin{lem}
\label{secondpolar}
Let $\CC\subseteq\lf$ be bounded in probability. Then
\[\oll{(\bigcup_{k\in\blp} k\CC)}=\bigcup_{k\in\blp}
k\,\oll{\CC}^{\GG},\] and
\[ \ccpp=
\bigcap_{l\in\blp}\frac{1}{l}(\bigcup_{k\in\blp}
k\,\oll{\CC}^{\GG})\]
 where $\oll{(\ \ )}^{\GG}$ denotes the $\GG$-convex,
solid and closed hull.
\end{lem}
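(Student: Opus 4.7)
The second identity follows from the first combined with Lemma \ref{stbyci}, so I focus on proving
\[\oll{\Bigl(\bigcup_{k\in\blp} k\CC\Bigr)} = \bigcup_{k\in\blp} k\,\oll{\CC}^{\GG}.\]
The plan is to construct the convex, solid, closed hull of $\bigcup_{k\in\blp} k\CC$ in three successive steps and verify at each step that the resulting set retains the shape $\bigcup_{k\in\blp} k\cdot X$ for an appropriate $X\subseteq\lf$.

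The convex and solid steps are essentially algebraic. A typical convex combination $\sum_i\lambda_i k_i f_i$ with $f_i\in\CC$, $k_i\in\blp$ and $\sum\lambda_i=1$ factors as $K\cdot\sum_i\alpha_i f_i$ with $K=\sum_i\lambda_i k_i\in\blp$ (since $\EE[K]\leq 1$) and $\GG$-measurable weights $\alpha_i=\lambda_i k_i/K$ summing to $1$ on $\{K>0\}$; this identifies $\text{conv}\bigl(\bigcup_{k} k\CC\bigr)=\bigcup_{k\in\blp} k\cdot\text{conv}_{\GG}\CC$. Next, any $f$ dominated by $kg$ with $g\in\text{conv}_\GG\CC$ can be written $f=k(f/k)$ with $f/k\leq g$ (convention $0/0=0$), yielding $\text{sol}\bigl(\text{conv}(\bigcup_{k} k\CC)\bigr)=\bigcup_{k\in\blp} k\cdot\text{sol}(\text{conv}_\GG\CC)$.

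The closure in probability is the main obstacle. The inclusion $\bigcup_k k\oll{\CC}^{\GG}\subseteq\oll{(\bigcup_k k\CC)}$ is easy: if $g\in\oll{\CC}^{\GG}$ is a limit in probability of $g_n\in\text{sol}(\text{conv}_\GG\CC)$, then $kg_n\to kg$ in probability for every $k\in\blp$. For the reverse, suppose $f_n=k_ng_n\to f$ in probability with $k_n\in\blp$ and $g_n\in\text{sol}(\text{conv}_\GG\CC)$. Since $\CC$ is bounded in probability and this property passes through $\GG$-convex, solid and closed hulls (along the lines of the argument in \cite{bipolar}), $(g_n)$ is bounded in probability, and $(k_n)$ is bounded in $L^1(\GG)$. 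Apply the $L^0_+$-valued Kom\-l\'os-type lemma jointly to $(k_n,f_n)$ to extract forward convex combinations $\tilde k_n=\sum_{i\geq n}\lambda^{(n)}_i k_i\to k$ a.s.\ and $\tilde f_n=\sum_{i\geq n}\lambda^{(n)}_i f_i\to f$ a.s., with $k\in\blp$ by Fatou. Factor
\[\tilde f_n=\tilde k_n\cdot h_n,\qquad h_n:=\sum_i\frac{\lambda^{(n)}_i k_i}{\tilde k_n}\,g_i\]
(using $0/0=0$); the coefficients are $\GG$-measurable, nonnegative and sum to at most $1$, so $h_n$ is a $\GG$-convex combination of elements of the $\GG$-convex set $\text{sol}(\text{conv}_\GG\CC)$ and therefore lies in it.

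The remaining subtlety is the event $\{k=0\}$. On $\{k>0\}$ one has $h_n\to f/k$ a.s.; on $\{k=0\}$, boundedness in probability of $(h_n)$ together with $\tilde k_n\to 0$ a.s.\ forces $\tilde k_nh_n\to 0$ in probability there, so $f=0$ a.s.\ on $\{k=0\}$. Setting $g^{*}=f/k$ (convention $0/0=0$) I obtain $f=kg^{*}$ globally. The truncated sequence $h_n\cdot\ind{k>0}$ still lies in $\text{sol}(\text{conv}_\GG\CC)$ by solidity and the $\GG$-measurability of $\ind{k>0}$, and converges a.s.\ to $g^{*}$, placing $g^{*}\in\oll{\CC}^{\GG}$ by closedness. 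Thus $f=kg^{*}\in\bigcup_{k\in\blp}k\oll{\CC}^{\GG}$, completing the first identity; the second then follows by substituting into Lemma \ref{stbyci}.
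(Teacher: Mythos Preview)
Your proof is correct and follows essentially the same route as the paper: Koml\'os applied to the $\blp$-weights, refactoring the resulting forward convex combinations as $\GG$-convex combinations of elements of the hull, and using boundedness in probability to handle the set $\{k=0\}$. The only organisational difference is that the paper first replaces $\CC$ by $\oll{\CC}^{\GG}$ (noting this does not change the conditional polar) and then directly proves $\bigcup_{k\in\blp} k\CC$ is closed, whereas you build the convex, solid, and closed hulls in three explicit stages; the mathematical content is the same.
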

\begin{rem} \label{17} Lemma \ref{secondpolar} can be restated as follows:
\em{ For each $f\in\ccpp$ and each $l\in\blp$ there are $h\in
\oll{\CC}^{\GG} $ and $k\in\blp$ such that $fl=hk$.\em}
\end{rem}
\begin{proof}

From Lemma \ref{jc} and  that
$\xpp{\oll{\CC}^{\GG}}=\ccp$, we can assume without loss
of generality that $\CC$ is already $\GG$-convex, solid
and closed, because taking a $\GG$-convex, solid and
closed hull preserves boundedness in probability. Let
$\overline{(\ \ \ )}$ denote the closure with respect to
convergence in probability. We only need to prove that \[
\overline{(\bigcup_{k\in\blp} k\CC)}\subseteq
\bigcup_{k\in\blp} k \CC,\]  since $\cup_{k\in\blp} k\CC$
is a convex and solid subset of $
\overline{(\bigcup_{k\in\blp}k\CC)}$.
 Let $f\in\overline{(\cup_{k\in\blp} k\CC)}$. Then, there is a sequence
 $f_n$ converging to $f$ in
probability, and each $f_n$ is of the form
$l_nh_n$ for some $l_n\in\blp$ and $h_n\in \CC$. By passing to a
subsequence, we can assume that $f_n\ra f$ a.s. The sequence $l_n$
is bounded in $L^1$, so by Komlos' Theorem (see \cite{S86} and references
therein
for a
good exposition and generalizations) there is a sequence
of convex combinations
\[ k_n\in\conv{ (l_n,l_{n+1},\ldots)}\subseteq\blp\] converging to a
random variable $l'$ a.s. By Fatou Lemma, $\EE[l']\leq 1$ so $l'\in\blp$. If
$k_n$ is of the form $\sum_{j=n}^{m_n} \al^n_j l_j$, we define (recalling
that $\frac{0}{0}=0$)
\[ k^n_j=\frac{\al^n_j l_j}{k_n}\ \mbox{and}\
\hh_n=\sum_{j=n}^{m_n} k^n_j h_j.\]  By $\GG$-convexity of $\CC$,
$\hh_n\in \CC$ because
$\sum_{j=n}^{m_n} k_j^n=1$. If we redefine $\hh_n$ by putting
$\hh_n=0$ on $\set{f=0}$ we still have $\hh_n\in \CC$, and  the
relation
\[ f=\lim_n\sum_{j=n}^{m_n} \al_j^n l_j h_j=\lim_n k_n \hh_n\]
allows us to conclude that on $\set{f>0}\cap\set{l'=0}$, $\hh_n$ must converge
to
$+\infty$ a.s. However, $\CC$ is bounded in probability so we must have
$\PP[\set{f>0}\cap\set{l'=0}]=0$. It is now clear that there is a finite random
variable $\hh\in \overline{\CC}=\CC$ such that $\hh_n\ra \hh$ a.s.
Therefore

\[ f=l'\hh\in\bigcup_{k\in\blp} k\CC.\]
\end{proof}
\begin{proof}[Proof ({\bf Conditional Bipolar Theorem \ref{cbt}})]
Without loss of generality we assume $\CC$ is already
closed, solid, $\GG$-convex and bounded in probability.

For $f\in\ccpp$ we define
\[ H^f=\set{ h\in \CC\,:\, \set{f=0}\subseteq\set{h=0}\ \mbox{and}\
f\eg{h}=h\eg{f}}.\] By Remark \ref{17} we can choose
$h'\in\CC$ and $g\in\blp$ such that $f=gh'$. Then let
$h=h'\ind{f=0}$ and obtain
 $h\eg{f}=gh\eg{h}=f\eg{h},$ so $h\in H^f$ implying that $H^f$ is not empty.

In order to prove the theorem we need to show that $f$ is
dominated by an element of $H^f$. We first show that
$\hm=\esssup H^f\in H^f$.
 For $ h_1, h_2 \in H^f$ define
$C=\set{\eg{h_1}>\eg{h_2}}$. From the defining property
of $H^f$ we have
\begin{eqnarray*} (h_1\vee h_2)\eg{f}&=&(\eg{h_1}\vee\eg{h_2})f=
(\inds{C}\eg{h_1}+\inds{C^c}\eg{h_2})f\\&=&
(\inds{C}h_1+\inds{C^c}h_2)\eg{f}.\end{eqnarray*} As
$h_1=h_2=0$ on $\set{\eg{f}=0}$ we immediately conclude
that $h_1\vee h_2= \inds{C}h_1+\inds{C^c}h_2$ and
$h_1\vee h_2\in\CC$ by $\GG$-convexity. We proceed
further and note that
\begin{eqnarray*} \eg{h_1\vee
h_2}f&=&\eg{h_1\inds{C}+h_2\inds{C^c}}f=(h_1\inds{C}+h_2\inds{C^c})\eg{f}
\\&=&(h_1\vee h_2) \eg{f},\end{eqnarray*}
so $h_1\vee h_2\in H^f$ which proves that $H^f$ is closed under pairwise
maximization. By Theorem A.3. in \cite{KS} , $\hm$ can be
written as $\hm=\lim_n h_n$, where $h_n$ is a nondecreasing
sequence in $H^f$. Boundedness in probability of $\CC$
and the monotone convergence theorem imply that $\hm<\infty$ a.s.
and ${\hm}\eg{f}=f\eg{\hm}$. Finally,  $\hm\in
H^f$ because $\CC$ is closed.

To see that $f$ is dominated by $\hm$, we define
$A=\set{\eg{f}>\eg{\hm}}$ and
 assume $\PP[A]>0$. With $l=\inds{A}/\PP[A]\in\blp$, by Remark \ref{17},
there
are $h\in\CC$ and $k\in\blp$ such that $lf=kh$. Without loss of generality we
may assume that $h\in H^f$. As $k\hm\geq kh$, through conditioning upon
 $\GG$ we have that $k\eg{\hm}\geq l\eg{f}$ so
 \[ l\eg{f}=\inds{A}l\eg{f}\leq \inds{A}k\eg{\hm}\leq \inds{A} k \eg{f}.\]
 The fact that $\eg{f}$ is strictly positive on $A$ leads us to conclude that
$k\geq l$ on $A$. As $k\in\blp$ and $\EE[l]=1$ we must have $k=l$ and,
consequently, $f\leq \hm$ on $A$. Taking conditional expectation we get
$\PP[A]=0$.

 We have shown that $\eg{f}\leq \eg{\hm}$ and the definition of $H^f$ immediately
yields
$f\leq \hm$. In other words, $f$ is dominated by an element of $\CC$, implying
that $f\in\CC$ by solidity. Thus $\ccpp\subseteq \CC$. The converse inclusion is
obvious.
\end{proof}

We may now proceed gradually to the proof of our main
result. For a process $Y\in\SU$ and $t>s$, we denote by
$\frac{Y_t}{Y_s}$ the multiplicative increment   and we
define $\Delta_{t,t} Y=1$.
\begin{prop} \label{one} Let $\DD\subseteq \PO$ with $\one\in \DD$, where $\one$
denotes the constant process equal to $1$. Then $\DP$ is a closed,
solid and fork-convex subset of $\SUb$.
\end{prop}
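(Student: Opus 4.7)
Since $\one\in\DD$, applying the polar definition with $X=\one$ forces every $Y\in\DP$ to be a supermartingale with $Y_0\le 1$, so $\DP\subseteq\SUb$ is automatic. The three structural properties can then each be reduced to checking the supermartingale defining-inequality for the product $XY$ with $X\in\DD$ arbitrary but fixed.

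For closedness under Fatou convergence, suppose $(\ynn)\subset\DP$ Fatou converges to $Y\in\PO$ via the dense set $\TT$. For each fixed $X\in\DD$ the c\`adl\`ag positive process $X$ satisfies $X_s\to X_t$ as $s\downarrow t$, $s\in\TT$, so multiplying through the $\liminf_{s\downarrow t}\liminf_n$ and $\limsup_{s\downarrow t}\limsup_n$ expressions gives that $(X\ynn)$ Fatou converges to $XY$. Since each $X\ynn$ lies in $\SUb$, Proposition \ref{subfsub} yields $XY\in\SUb$, and hence $Y\in\DP$.

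For solidity, let $Y\in\DP$, $B\in\DE$, and $X\in\DD$. Fix $s\le t$. Using $B_t\le B_s$ pointwise (since $B$ is nonincreasing and nonnegative), followed by pulling out the $\FF_s$-measurable factor $B_s$,
\begin{equation*}
\es{X_tY_tB_t}\le \es{X_tY_tB_s}=B_s\es{X_tY_t}\le B_sX_sY_s,
\end{equation*}
so $XYB$ is a supermartingale, and $(XYB)_0=X_0Y_0B_0\le 1$. Hence $YB\in\DP$.

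The main work is fork-convexity. Given $Y^{(1)},Y^{(2)},Y^{(3)}\in\DP$, $s\in[0,T]$, $h\in L^0_+(\FF_s)$ with $h\le 1$, and the spliced process $Y$, fix $X\in\DD$ and check $\es{X_tY_t}\le X_sY_s$ in the three cases $t\le s$, $s\le r\le t$, and $r<s\le t$ (where I write $r$ for the earlier time). The first case is the supermartingale property of $XY^{(1)}$. For the second, $Y^{(1)}_s$, $h$, $Y^{(2)}_s$, $Y^{(3)}_s$ are $\FF_r$-measurable, so they factor out of $\epp{X_tY_t}$, leaving $\epp{X_tY^{(i)}_t}\le X_rY^{(i)}_r$ for $i=2,3$ and assembling back to $X_rY_r$. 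For the third case I would first condition on $\FF_s$: pulling out $Y^{(1)}_s$, $h$, $Y^{(2)}_s$ and $Y^{(3)}_s$ and applying the supermartingale property of $XY^{(i)}$ yields, on $\{Y^{(i)}_s>0\}$, the bound $\es{X_tY^{(i)}_t}/Y^{(i)}_s\le X_s$; on $\{Y^{(i)}_s=0\}$ Proposition \ref{yor} combined with the convention $0/0=0$ makes the corresponding term vanish, so the inequality still holds. The result is $\es{X_tY_t}\le X_sY^{(1)}_s$, and a further conditioning on $\FF_r$ together with the supermartingale property of $XY^{(1)}$ finishes the case. The initial inequality $(XY)_0\le 1$ reduces to $X_0Y^{(1)}_0\le 1$ (which holds because $Y^{(1)}\in\DP$), noting that when $s=0$ the convex combination at $t=0$ collapses to $Y^{(1)}_0$. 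The careful treatment of $\{Y^{(i)}_s=0\}$ in the third case is the only subtle point; everything else is a routine conditioning argument.
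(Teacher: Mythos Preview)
Your argument is correct and follows the same route as the paper's proof: reduce each property to the supermartingale inequality for $XY$ with $X\in\DD$ fixed, use Proposition \ref{subfsub} for closedness after noting $X\ynn$ Fatou-converges to $XY$, and handle fork-convexity by pulling the $\FF_s$-measurable factors $Y^{(1)}_s,h,1/Y^{(2)}_s,1/Y^{(3)}_s$ outside the conditional expectation. The paper only writes out the single case $s=t_0$, $t=T$ and declares the rest analogous; you have spelled out all three cases and, in addition, made explicit the treatment of $\{Y^{(i)}_s=0\}$ via the $0/0=0$ convention, which the paper leaves implicit.

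Two cosmetic points worth cleaning up: you overload $s$ as both the splice time from the definition of fork-convexity and the earlier time in the supermartingale inequality $\es{X_tY_t}\le X_sY_s$, and the parenthetical introduction of $r$ does not quite repair this (the displayed inequality and the macro $\epp{\cdot}$, which expands to $\EE[\cdot|\FF_p]$, are still inconsistent with the surrounding text). The mathematics is unaffected, but the write-up would read better with a consistent choice of symbols.
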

\begin{proof}
Since $\one$ is in $\DD$, obviously $\DP\subseteq \SUb$. Let $(Y^n)_{n\in\N}$ be a
sequence in $\DP$, Fatou-converging to some $Y\in\SUb$ and let $X\in \DD$.  Since all
$XY^n$ are in $\SUb$ and  $XY^n$ Fatou-converges to $XY$, $XY$
is in $\SUb$ as well by Proposition \ref{subfsub}. This proves the
closedness of $\DP$.
Let $Y\in \DP$ and $B\in\DE$. Then , for all $X\in \DD$,
$BXY$ is a supermartingale because $XY$ is one. Finally,
let $Y^{(1)},Y^{(2)},Y^{(3)}\in \DP$, $t_0\in [0,T]$ and $h\in
L^0_+(\FF_{t_0})$
with $0\leq h\leq 1$ and let a process Y be defined by

\[Y_t=\left\{
\begin{array}
{cl} Y^{(1)}_t& t<t_0\\
Y^{(1)}_{t_0}\left(
h\frac{Y^{(2)}_{t}}{Y^{(2)}_{t_0}}+(1-h)\frac{Y^{(3)}_{t}}{Y^{(3)}_{t_0}}
\right) & t\geq t_0
\end{array}\right.
.\] We want to prove that $\EE[X_tY_t|\FF_s]\leq X_sY_s$,
for all $t>s$ and all $X\in\DD$. To do so, we only
consider the case $s=t_0,t=T$. The other cases can be
dealt with analogously. By definition of $Y$ and the fact
that $XY^{(1)}$, $XY^{(2)}$ and $XY^{(3)}$ are
supermartingales,
\begin{eqnarray*}
\EE[Y_tX_t|\FF_{t_0}]&=& \EE[Y^{(1)}_{t_0} h \Delta_{T,t_0}Y^{(2)}
X_t|\FF_{t_0}]+ \EE[Y^{(1)}_{t_0} (1-h) \Delta_{T,t_0}Y^{(3)}
X_t|\FF_{t_0}]\\ &\leq&
Y^{(1)}_{t_0}(hX_{t_0}+(1-h)X_{t_0})=Y^{(1)}_{t_0}X_{t_0}=Y_{t_0}X_{t_0}.
\end{eqnarray*}
$XY$ is, therefore, a supermartingale so $Y\in\DP$.
\end{proof}

The proof of the following lemma was inspired by
techniques in Kram- $\text{kov}^{(1996)}$.
\begin{lem} Suppose $\CC$ is a fork-convex far-reaching subset of $\SU$.
 Let $t_1\leq t_2\in [0,T]$
and let $g\in L^1_+(\FF_{t_2})$ be such that  $\esssup_{Y\in \CC}
\EE[g\Delta_{t_2,t_1}Y|\FF_{t_1}]\leq 1$ a.s. If we define the
process $X$ by
\[ X_t=\left\{\begin{array}{cl} 1 & t<t_1\\ \esssup_{Y\in \CC}
\EE[g\Delta_{t_2,t}Y|\FF_t]& t\in [t_1,t_2) \\ g& t\in
[t_2,T],\end{array}\right. \] then $XY$ is a supermartingale for
each $Y\in \CC$ and $X$ permits a c\`{a}dl\`{a}g modification. \label{cadl}
\end{lem}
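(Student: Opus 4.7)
The plan is to leverage one key consequence of fork-convexity: for each $Y\in\CC$, $Z\in\CC$ and $t\in[0,T]$, applying the definition with $Y^{(1)}=Y$, $Y^{(2)}=Z$, $Y^{(3)}=Z$ and $h=1$ at splice-time $t$ produces a process $W(Y,Z,t)\in\CC$ equal to $Y$ on $[0,t)$ and equal to $Y_tZ_r/Z_t$ for $r\geq t$ (interpreted via the $0/0=0$ convention). This $W$ is the workhorse of both parts of the proof.

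\emph{Supermartingale property.} Fix $Y\in\CC$, $s\leq t$, and assume first $t_1\leq t<t_2$. For each $Z\in\CC$, with $W=W(Y,Z,t)$ we obtain the pointwise identity
\[Y_s\,\EE[g\Delta_{t_2,s}W\,|\,\FF_s]=\EE[Y_t\,g\Delta_{t_2,t}Z\,|\,\FF_s],\]
using that $W_s=Y_s$ since $s\leq t$. A second fork-convex application, with $h$ the indicator of the event where one $Z$ beats another in conditional expectation, shows the family $\{\EE[g\Delta_{t_2,t}Z|\FF_t]:Z\in\CC\}$ is upward directed, so by Theorem A.3 of Karatzas-Shreve an increasing sequence $Z_n\in\CC$ achieves $\EE[g\Delta_{t_2,t}Z_n|\FF_t]\uparrow X_t$. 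Monotone convergence, the tower property and the bound $\EE[g\Delta_{t_2,s}W_n|\FF_s]\leq X_s$ then give
\[\EE[Y_tX_t\,|\,\FF_s]=\lim_n\EE[Y_t\,g\Delta_{t_2,t}Z_n\,|\,\FF_s]=\lim_n Y_s\,\EE[g\Delta_{t_2,s}W_n\,|\,\FF_s]\leq Y_sX_s.\]
When $s<t_1$, the bound $\EE[g\Delta_{t_2,s}W|\FF_s]\leq 1=X_s$ follows from the polar hypothesis and the supermartingale property of $W\in\CC$ by conditioning at $\FF_{t_1}$. The cases $t\geq t_2$ reduce to $t=t_2$ via the supermartingale property of $Y$, and $t=t_2$ is handled as above with $W=Y$.

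\emph{C\`adl\`ag modification.} By far-reachingness choose $Y^*\in\CC$ with $Y^*_T>0$ a.s., so Proposition \ref{yor} forces $Y^*_t>0$ a.s.\ for every $t\in[0,T]$. The nonnegative supermartingale $M_t:=X_tY^*_t$ admits a c\`adl\`ag modification $\tilde M$ as soon as $t\mapsto\EE[M_t]$ is right-continuous, and then $\tilde X:=\tilde M/Y^*$ is the required c\`adl\`ag modification of $X$. For $t_0\in[t_1,t_2)$ and $s\downarrow t_0$, the fork-convex splice $W(Y^*,Z,s)$ converts each $Z\in\CC$ with $\EE[g\Delta_{t_2,t_0}Z|\FF_{t_0}]$ near $X_{t_0}$ into a competitor for $X_s$, whose expectation should approach the original as $s\downarrow t_0$ since $Y^*_{t_0}=Z_{t_0}>0$ and c\`adl\`ag regularity forces $Y^*_s/Z_s\to 1$ a.s.

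The delicate step will be this last convergence: the ratios $Y^*_s/Z_s$ appearing in the approximation are not a priori uniformly bounded, so justifying $\EE[gW_{t_2}]\to\EE[gZ_{t_2}]$ will require a uniform integrability argument exploiting $\EE[gZ_{t_2}]\leq 1$ (from the polar hypothesis) together with the supermartingale structure of the approximating family.
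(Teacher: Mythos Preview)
Your supermartingale argument is essentially the paper's: the splice $W(Y,Z,t)$ is exactly the $\tilde Y$ the paper produces via fork-convexity, the upward-directedness argument is identical, and monotone convergence plus $\EE[g\Delta_{t_2,s}W\mid\FF_s]\le X_s$ closes the loop in the same way.

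The c\`adl\`ag part has the right overall strategy (reduce to right-continuity of $t\mapsto\EE[M_t]$ with $M=XY^*$, then divide by $Y^*$), but the execution goes astray. First, the claims ``$Y^*_{t_0}=Z_{t_0}>0$'' and ``$Y^*_s/Z_s\to 1$'' are simply false: $Y^*$ and $Z$ are unrelated elements of $\CC$, and right-continuity only gives $Y^*_s/Z_s\to Y^*_{t_0}/Z_{t_0}$ on $\{Z_{t_0}>0\}$. What you actually need is $Y^*_s\,g\,\Delta_{t_2,s}Z\to Y^*_{t_0}\,g\,\Delta_{t_2,t_0}Z$ a.s., which does follow from right-continuity of $Y^*$ and $Z$ together with Proposition~\ref{yor}.

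Second, and more importantly, the ``delicate step'' you flag is not delicate at all once you notice that only one inequality is required. Since $M$ is a nonnegative supermartingale, $\limsup_{s\downarrow t_0}\EE[M_s]\le\EE[M_{t_0}]$ is automatic; you only have to show $\EE[M_{t_0}]\le\liminf_{s\downarrow t_0}\EE[M_s]$. For that, fix $\ep>0$, pick a single $Z\in\CC$ with $\EE[Y^*_{t_0}\,g\,\Delta_{t_2,t_0}Z]>\EE[M_{t_0}]-\ep$, and apply Fatou's lemma (not uniform integrability) to the nonnegative sequence $Y^*_{s}\,g\,\Delta_{t_2,s}Z$:
\[
\EE[M_{t_0}]-\ep<\EE\bigl[\lim_{s\downarrow t_0}Y^*_s\,g\,\Delta_{t_2,s}Z\bigr]\le\liminf_{s\downarrow t_0}\EE\bigl[Y^*_s\,g\,\Delta_{t_2,s}Z\bigr]\le\liminf_{s\downarrow t_0}\EE[M_s],
\]
the last inequality because $\EE[g\,\Delta_{t_2,s}Z\mid\FF_s]\le X_s$. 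This is exactly how the paper proceeds; no uniform integrability argument is needed, and attempting one would be both harder and unnecessary.
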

\begin{proof} Without any loss of generality we assume
$t_2=T$. First we will prove that, for $t\geq t_1$, there
is a sequence $(Y^n)_{n\in\N}\in \CC$ such that
\begin{equation}\label{jen}
\EE[g\Delta_{T,t}Y^n|\FF_t]\nearrow X_t,
\end{equation} as $n\to\infty$. By Theorem A.3.  in \cite{KS} it is enough
to prove that
the set $ \set{ \EE[g\Delta_{T,t} Y|\FF_t]\,:\, Y\in \CC} $ is
closed under pairwise maximization. Let $Y^{(1)},Y^{(2)}$ be in $\CC$. Put
\[ h=\ind{\et{g\Delta_{T,t} Y^{(1)}}\geq \et{g\Delta_{T,t} Y^{(2)}}}.\]
Then for the process $Y^{max}$ defined by
\[Y^{max}_s=\left\{
\begin{array}
{cl} Y^{(1)}_s& s<t\\
Y^{(1)}_t\left(
h\frac{Y^{(1)}_{s}}{Y^{(1)}_t}+(1-h)\frac{Y^{(2)}_{s}}{Y^{(2)}_t}
\right) & s\geq t
\end{array}\right.
\] fork-convexity implies that $Y^{max}\in \CC$ and
\[ \et{g\Delta_{T,t} Y^{max}}=\et{g\Delta_{T,t} Y^{(1)}}\vee
\et{g\Delta_{T,t} Y^{(2)}}.\] Fix $t_1\leq s\leq t\leq t_2$ and a sequence
$(Y^n)_{n\in\N}$ such
that (\ref{jen}) holds. By the Monotone Convergence Theorem, for each
$Y\in\CC$, we have
\begin{eqnarray*} \es{Y_t X_t}&=& \es{\lim_n Y_t
\et{g\Delta_{T,t} Y^n}}=\lim_n \es{ Y_t g\Delta_{T,t} Y^n}\\ &=&
Y_s\lim_n\es{ g\Delta_{T,t}Y^n\Delta_{t,s} Y}.\end{eqnarray*}
By fork-convexity, $\Delta_{T,t}Y^n\Delta_{t,s} Y_s$ is equal to
$\Delta_{T,s}\tilde{Y}$ for some $\tilde{Y}\in \CC$, so
\[\lim_n\es{ g\Delta_{T,t}Y^n\Delta_{t,s} Y_s}\leq\esssup_{Y\in
\CC}\es{g\Delta_{T,s} Y}=X_s.\] Therefore $\es{Y_t X_t}\leq Y_s
X_s$ and so $XY$ is a supermartingale on $[t_1,t_2)$ for all $Y\in
\CC$. Because of the condition $\esssup_{Y\in \CC}
\EE[g\Delta_{t_2,t_1}Y|\FF_{t_1}]\leq 1$ a.s., $XY$ is a
supermartingale on the whole interval $[0,T]$.

To prove that $X$ has a c\'{a}dl\`{a}g version, we will
first prove that $S=X\hy$ has one, where $\hy$ is an
element of $\CC$ such that $\hy_T>0$ a.s. The process $S$
is a supermartingale so it is enough to prove that
$t\mapsto \EE[S_t]$ is right-continuous (see \cite{5},
Theorem II.2.9, page 61). We fix $p\in [0,T]$ and a
sequence $(p_n)_{n\in\N}$ such that $p_n\searrow p$, and
consider the only non-trivial case  - namely, when $p\in
[t_1,t_2)$. Let $Y^n$ be a sequence in $\CC$ such that \[
\hy_p \epp{g\Delta_{T,p} Y^n}\nearrow S_p.\] For $\ep>0$
there is an $n\in\N$ such that $\EE[\hy_p
g\Delta_{T,p}Y^n]>\EE[S_p]-\ep$. By right-continuity of
processes in $\CC$ and Fatou Lemma
\begin{eqnarray*}
\EE[\hy_pX\Delta_{T,p} Y^n] &=& \EE[\lim_k (\hy_{p_k} g
\Delta_{T,p_k} Y^n)] \\ &\leq& \liminf_k\EE[ \hy_{p_k} g
\Delta_{T,p_k} Y^n]\\&\leq&\lim_k
\EE[S_{p_k}].\end{eqnarray*} Now, $\lim_k
\EE[S_{p_k}]\geq \EE[S_p]-\ep$ for all $\ep>0$, so
$t\mapsto \EE[S_t]$ is right continuous. Therefore
$\hy_tX_t$ has a c\`{a}dl\`{a}g modification $P_t$. Since
$\hy_T>0$, $\set{(t,\omega)\,:\, \hy_t(\omega)=0\ \mbox{
or} \ \hy_{t-}(\omega)=0}$ is an evanescent set so we
conclude that $\frac{P_t}{\hy_t}$ is a c\`{a}dl\`{a}g
modification of $X_t$.
\end{proof}
\begin{rem}  For $t_1<t_2\in [0,T]$ and
$\DD\subseteq \PO$  we put $\DD_{t_2,t_1}=\set{\Delta_{t_2,t_1}
X:X\in \DD}$ whenever it is well-defined.
We note that if $X\in\PO$ and $\hy\in \SU$ with $\hy_T>0$ a.s. such that $X\hy$ is
a supermartingale, then $X$ has the following property (inherited from $\hy X$): if
$t_1<t_2\in [0,T]$ and $X_{t_1}=0$ on $A\in\FF_{t_1}$, then
$X_{t_2}=0$ on $A$ as well. Therefore,
$\Delta_{t_2,t_1} X$ is well-defined, if $\DD$ is a polar of a far-reaching
subset of $\SU$.
\end{rem}
\begin{lem} Let $\CC\subseteq \SUb$ be a fork-convex, solid and far-reaching set.
Then, for all $t_1<t_2\in
[0,T]$, $\CC_{t_2,t_1}$ is solid, convex and
\[ \fc{\CC_{t_2,t_1}}{\FF_{t_1}}=(\CP)_{t_2,t_1},\]
 where all random variables in the definitions of solidity and conditional polar are
 assumed to be $\FF_{t_2}$-measurable.\end{lem}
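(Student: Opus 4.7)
The plan is to prove the lemma in four steps: (i) $\CC_{t_2,t_1}$ is solid, (ii) $\CC_{t_2,t_1}$ is convex, (iii) $(\CP)_{t_2,t_1}\subseteq\fc{\CC_{t_2,t_1}}{\FF_{t_1}}$, and (iv) the reverse inclusion. The first three are direct; the last invokes Lemma \ref{cadl} and is where the real work lies.

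For (i), given $Z\in L^0_+(\FF_{t_2})$ with $Z\leq\Delta_{t_2,t_1}X$ for some $X\in\CC$, I set $B_t=1$ for $t<t_2$ and $B_t=Z/\Delta_{t_2,t_1}X$ for $t\geq t_2$, using the convention $0/0=0$. Then $B\in\DE$ since it is c\`{a}dl\`{a}g, adapted, nonincreasing, and bounded by $1$, so solidity of $\CC$ gives $BX\in\CC$ and $\Delta_{t_2,t_1}(BX)=Z$. For (ii), far-reachingness furnishes $\hy\in\CC$ with $\hy_T>0$ a.s., whence (by Proposition \ref{yor}) $\hy_{t_1}>0$ a.s.; given $\lambda\in[0,1]$ and $X^{(2)},X^{(3)}\in\CC$, applying fork-convexity at $s=t_1$ with $Y^{(1)}=\hy$, $h=\lambda$, and the given $Y^{(2)},Y^{(3)}$ produces a process in $\CC$ whose multiplicative increment over $[t_1,t_2]$ is exactly $\lambda\Delta_{t_2,t_1}X^{(2)}+(1-\lambda)\Delta_{t_2,t_1}X^{(3)}$. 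For (iii), if $Y\in\CP$ then $XY$ is a supermartingale, so $\EE[X_{t_2}Y_{t_2}|\FF_{t_1}]\leq X_{t_1}Y_{t_1}$; dividing by $X_{t_1}Y_{t_1}$ where positive (both sides vanish where it is zero, by Proposition \ref{yor}) yields $\EE[\Delta_{t_2,t_1}X\cdot\Delta_{t_2,t_1}Y\,|\,\FF_{t_1}]\leq 1$ for every $X\in\CC$, i.e.\ $\Delta_{t_2,t_1}Y\in\fc{\CC_{t_2,t_1}}{\FF_{t_1}}$.

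The hard direction (iv) is where Lemma \ref{cadl} is deployed. Given $g\in\fc{\CC_{t_2,t_1}}{\FF_{t_1}}$, a preliminary truncation $g_n=g\wedge n\in L^1_+$ combined with a Fatou-limit argument (using the Fatou-closedness of $\CP$, which follows from Proposition \ref{subfsub} as in the proof of Proposition \ref{one} without needing $\one\in\CC$) reduces the problem to the case $g\in L^1_+(\FF_{t_2})$. Apply Lemma \ref{cadl}: the process $Y_t=1$ for $t<t_1$, $Y_t=\esssup_{X\in\CC}\EE[g\Delta_{t_2,t}X|\FF_t]$ for $t\in[t_1,t_2)$, $Y_t=g$ for $t\geq t_2$ admits a c\`{a}dl\`{a}g modification, still denoted $Y$, such that $YX$ is a supermartingale for every $X\in\CC$; since $Y_0=1$, this places $Y\in\CP$. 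The defining condition on $g$ forces $Y_{t_1}\leq 1$ a.s., and the far-reaching $\hy$ gives $\{Y_{t_1}=0\}\subseteq\{g=0\}$ because $\Delta_{t_2,t_1}\hy>0$ a.s. Finally, define $B\in\DE$ by $B_t=1$ on $[0,t_2)$ and $B_t=Y_{t_1}$ on $[t_2,T]$; solidity of $\CP$ (the argument of Proposition \ref{one} does not require $\one\in\CC$) gives $BY\in\CP$, and a direct computation with $0/0=0$ yields $\Delta_{t_2,t_1}(BY)=g$, so $g\in(\CP)_{t_2,t_1}$.

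The principal obstacle is step (iv): one needs a truncation/Fatou-limit scheme to bring Lemma \ref{cadl} to bear on a general $g\in L^0_+(\FF_{t_2})$, and then the dropping process $B$ must be introduced and reconciled with the $0/0$ convention on $\{Y_{t_1}=0\}$ (where the far-reaching hypothesis is essential) in order to recover $g$ on the nose rather than merely a dominating element.
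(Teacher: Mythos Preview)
Your argument follows the same route as the paper: (i)--(iii) are direct (the paper disposes of them in one sentence), and (iv) is an application of Lemma~\ref{cadl}. Two differences are worth noting. First, the paper finishes (iv) more simply: after producing $X\in\CP$ from Lemma~\ref{cadl} it observes $\Delta_{t_2,t_1}X=X_{t_2}/X_{t_1}\geq g$ (since $X_{t_2}=g$ and $X_{t_1}\leq 1$) and then invokes solidity of $(\CP)_{t_2,t_1}$, rather than manufacturing your dropping process $B$ to hit $g$ on the nose. Second, the paper applies Lemma~\ref{cadl} directly to $g$ without truncating; this is legitimate because the hypothesis $g\in L^1_+$ is never actually used in the proof of that lemma (all the relevant expectations are controlled by the $\esssup\leq 1$ condition together with the far-reaching $\hy$). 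Your truncation-then-Fatou-limit reduction is therefore unnecessary --- which is fortunate, because the passage to the limit is more delicate than your sketch suggests: Fatou-closedness of $\CP$ yields a limit process $Y\in\CP$, but $Y_{t_1}$ is defined via a right limit along a dense set and need not coincide with $\lim_n Y^{(n)}_{t_1}$, so the bound $Y_{t_1}\leq 1$ (needed for $\Delta_{t_2,t_1}Y\geq g$) is not automatic from your argument.
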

\begin{proof} The solidity and convexity of $\CC_{t_2,t_1}$ follow from
the solidity and fork-convexity of $\CC$. By the previous remark,
$\CP_{t_2,t_1}$ is well defined. Let $g\in
\fc{\CC_{t_2,t_1}}{\FF_{t_1}}\subseteq L^+_0(\FF_{t_2})$.
Then \[\esssup_{Y\in \CC}
\EE[g\Delta_{t_2,t_1} Y| \FF_{t_1}]\leq 1\] so, by Lemma
\ref{cadl} the c\`{a}dl\`{a}g version of the process
\[ X_t=\left\{\begin{array}{cl} 1 & t<t_1\\ \esssup_{Y\in \CC}
\EE[g\Delta_{t_2,t}Y|\FF_t]& t\in [t_1,t_2) \\ g& t\in
[t_2,T],\end{array}\right. \] is in $\CP$. Moreover,
$\frac{X_{t_2}}{X_{t_1}}\geq g$, so, by solidity, $g\in
(\CP)_{t_2,t_1}$. Conversely, let $h\in \CP_{t_2,t_1}$ be of the from
$h=\frac{X'_{t_2}}{X'_{t_1}}$. By definition,
$\EE[X'_{t_2}Y_{t_2}|\FF_{t_1}]\leq X'_{t_1}Y_{t_1}$, so
$\EE[h\Delta_{t_2,t_1} Y| \FF_{t_1}]\leq 1$ for all $Y\in \CC$.
Therefore $h\in \fc{\CC_{t_2,t_1}}{\FF_{t_1}}$.
\end{proof}
\begin{lem} Let $\CC$ be a far-reaching subset of $\SUb$.
 Pick $0=t_0< t_1<t_2<\ldots < t_m\leq T$ and  $Y\in \CPP$.
Then there is a sequence $Y^n$ of elements of the solid and
fork-convex hull $\oll{\CC}$ of $\CC$ such that $\lim_n Y^n_{t_k}=Y_{t_k}$ a.s.,
for $k=0,\ldots,m$.
\end{lem}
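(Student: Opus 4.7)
The plan is induction on $m$, with the key structural ingredient being that for $Y\in\CPP$ the multiplicative increment $\Delta_{t_{k+1},t_k}Y$ lies in the conditional bipolar of the set $(\oll{\CC})_{t_{k+1},t_k}$ of increments of $\oll{\CC}$ with respect to $\FF_{t_k}$. To see this, I first replace $\CC$ by $\oll{\CC}$ (which is still a far-reaching, fork-convex, solid subset of $\SUb$ and gives the same process bipolar, since $\oll{\CC}^{\times}=\CC^{\times}$), apply the previous lemma to identify $\fc{(\oll{\CC})_{t_{k+1},t_k}}{\FF_{t_k}}=(\CP)_{t_{k+1},t_k}$, and observe that for any $g=\Delta_{t_{k+1},t_k}X'$ with $X'\in\CP$ the definition of $\CPP$ forces $X'Y$ to be a supermartingale with $(X'Y)_0\leq 1$, so $\EE[g\Delta_{t_{k+1},t_k}Y\mid\FF_{t_k}]\leq 1$; this is exactly the defining condition for $\Delta_{t_{k+1},t_k}Y\in\fcc{(\oll{\CC})_{t_{k+1},t_k}}{\FF_{t_k}}$. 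Because each element of $(\oll{\CC})_{t_{k+1},t_k}$ has conditional expectation $\leq 1$, the set is bounded in probability, and the Conditional Bipolar Theorem identifies this conditional bipolar with the smallest $\FF_{t_k}$-convex, solid, closed-in-probability subset of $L^0_+(\FF_{t_{k+1}})$ containing $(\oll{\CC})_{t_{k+1},t_k}$.

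For the base case $m=1$, $\FF_0$-convexity reduces to ordinary convexity, so $\Delta_{t_1,0}Y$ is a limit in probability of elements $g_n$ dominated by finite convex combinations $\sum_j\alpha_j^n\Delta_{t_1,0}Z^{(j,n)}$ with $Z^{(j,n)}\in\oll{\CC}$; iterating the fork-convex operation at $s=0$ realizes such a combination as the increment $\Delta_{t_1,0}\tilde Z^n$ of a single $\tilde Z^n\in\oll{\CC}$, and a suitable $B\in\DE$ shrinks by solidity to give $Y^n\in\oll{\CC}$ with $Y^n_{t_1}\to Y_{t_1}$ and (by choice of the initial factor in the iterated splicing) $Y^n_0\to Y_0$ in probability, hence a.s.~along a subsequence. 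For the inductive step from $m-1$ to $m$, extract $(\bar Y^n)\subseteq\oll{\CC}$ with $\bar Y^n_{t_k}\to Y_{t_k}$ a.s.~for $k\leq m-1$ and apply the structural fact at $k=m-1$ to write $\Delta_{t_m,t_{m-1}}Y$ as a limit in probability of elements $g_j\leq\sum_i h_i^j\Delta_{t_m,t_{m-1}}Z^{(i,j)}$ with $h_i^j\in L^0_+(\FF_{t_{m-1}})$, $\sum_i h_i^j=1$, and $Z^{(i,j)}\in\oll{\CC}$. Iterated fork-convexity at $s=t_{m-1}$ with $Y^{(1)}=\bar Y^n$ splices $\bar Y^n$ on $[0,t_{m-1})$ with the $Z^{(i,j)}$'s on $[t_{m-1},T]$ into a single element of $\oll{\CC}$ whose values at $t_0,\ldots,t_{m-1}$ coincide with those of $\bar Y^n$, and a further shrink by a $B\in\DE$ equal to $1$ on $[0,t_m)$ adjusts the value at $t_m$ to $\bar Y^n_{t_{m-1}}g_j$. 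A diagonal choice $j=j(n)$ satisfying $\PP[|g_{j(n)}-\Delta_{t_m,t_{m-1}}Y|>2^{-n}]<2^{-n}$ gives $g_{j(n)}\to\Delta_{t_m,t_{m-1}}Y$ a.s.~along a further subsequence, so the resulting $Y^n\in\oll{\CC}$ satisfies $Y^n_{t_k}\to Y_{t_k}$ a.s.~for $k\leq m-1$ (unchanged from $\bar Y^n$) and $Y^n_{t_m}=\bar Y^n_{t_{m-1}}g_{j(n)}\to Y_{t_{m-1}}\Delta_{t_m,t_{m-1}}Y=Y_{t_m}$ a.s.

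The main obstacle is translating between the abstract conditional closure operations supplied by the Conditional Bipolar Theorem (arbitrary finite $\FF_{t_{m-1}}$-convex combinations together with solid shrinks, closed in probability) and the concrete three-process fork-convex splicing at the single deterministic time $t_{m-1}$ that is allowed within $\oll{\CC}$: finite conditional convex combinations must be built from iterated binary mixings, the solid-hull domination must be realized by multiplication by an element of $\DE$, and a diagonal subsequence extraction is needed to convert convergence in probability to almost-sure convergence without disturbing the already-matched values at $t_0,\ldots,t_{m-1}$. The boundary degeneracies on $\set{Y_{t_k}=0}$, where multiplicative increments are a priori ill-defined, are absorbed by Proposition \ref{yor} together with the convention $0/0=0$.
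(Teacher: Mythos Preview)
Your argument is correct and follows the same essential strategy as the paper---identify the conditional polar of the increment set $(\oll{\CC})_{t_{k+1},t_k}$ via the preceding lemma, then invoke the Conditional Bipolar Theorem to approximate $\Delta_{t_{k+1},t_k}Y$---but you miss a simplification that the paper exploits. The preceding lemma also records that $(\oll{\CC})_{t_{k+1},t_k}$ is \emph{already} $\FF_{t_k}$-convex and solid (this is the first sentence of its proof, an immediate consequence of fork-convexity and solidity of $\oll{\CC}$). Hence by the Conditional Bipolar Theorem its conditional bipolar is simply its closure in probability: each $\Delta_{t_{k+1},t_k}Y$ is a limit of increments $\Delta_{t_{k+1},t_k}Y^{(k,n)}$ of elements of $\oll{\CC}$ \emph{directly}, with no need to pass through dominated $\FF_{t_k}$-convex combinations and then rebuild them inside $\oll{\CC}$ by iterated binary forking and $\DE$-shrinks. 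The paper also avoids induction: it approximates the initial value $Y_0$ and all $m$ increments in parallel and then splices the resulting sequences $Y^{(0,n)},\ldots,Y^{(m,n)}$ into a single $Y^{(n)}\in\oll{\CC}$ in one pass of fork-convexity. Your route works, but the paper's is shorter and isolates the r\^ole of the preceding lemma more cleanly; in particular, the identity $\overline{(\oll{\CC})_{t_{k+1},t_k}}=(\CPP)_{t_{k+1},t_k}$ is obtained there by applying that lemma twice, once to $\oll{\CC}$ and once to $\CP$ (which is far-reaching since $\one\in\CP$), rather than by your direct pairing argument.
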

\begin{proof}
 Let $0\leq t_1<t_2\leq T$.
The set $\set{Y_0\,:\,Y\in \oll{\CC}}$ is a subinterval of
$[0,\infty)$ containing $0$ and hence the bipolar $\set{Y_0\,:\,
Y\in \CPP}$ is just the closure of this interval. Therefore  there
is a sequence $Y^{(0,n)}\in \CC$, $n\in\N$ such that $\lim_n
Y^{(0,n)}_0=Y_0$. By the Bipolar Theorem \ref{bp} and the previous
lemma,
\[\overline{(\oll{\CC}_{t_1,0})}=(\oll{\CC}_{t_1,0})^{\circ\circ}=(\CC_{t_1,0})^{\circ\circ}
=\fcc{\CC_{t_1,0}}{\FF_{0}}=\CPP_{t_1,0},\] where  \,$
\overline{(\ \ )}$ denotes  closure with respect to the
topology of convergence in probability and we take all
random variables in the definitions of polars involved to
be $\FF_{t_1}$-measurable. We conclude there is a
sequence $(Y^{(1,n)})_{n\in \N}\in\oll{\CC}$ such that
$\Delta_{t_1,0} Y^{(1,n)}\to \Delta_{t_1,0} Y$ when
$n\to\infty$. Similarly,
\[\overline{(\oll{\CC}_{t_2,t_1})}=(\CPP)_{t_2,t_1},\]
by the Conditional Bipolar Theorem \ref{cbt}, so there is a
sequence $(Y^{(2,n)})_{n\in \N}\in\oll{\CC}$ such that
$\Delta_{t_2,t_1} Y^{(2,n)}\to \Delta_{t_2,t_1} Y$ as
$n\to\infty$. We continue this procedure to construct sequences
$(Y^{(k,n)})_{n\in\N}\in\oll{\CC}$  such that
$\Delta_{t_{k},t_{k-1}} Y^{(k,n)}\to \Delta_{t_{k},t_{k-1}} Y$ as
$n\to\infty$ for $k=3,\ldots, m$.

By fork-convexity and solidity of $\oll{\CC}$, there is a sequence
$Y^{(n)}\in\oll{\CC}$ such that $Y^{(n)}_0=Y^{(0,n)}_0$ and
$\Delta_{t_{k},t_{k-1}} Y^{(n)}=\Delta_{t_k,t_{k-1}} Y^{(k,n)}$ so
$Y^{(n)}_{t_k}\to Y_{t_k}$ as $n\to\infty$ for $k=0,1,\ldots,m$.
\end{proof}
\begin{lem} Let $\CC$ be a far-reaching subset of $\SUb$.
 For each $Y\in\CPP$ there is a sequence $Y^{(n)}$ in the solid and fork-convex
 hull $\oll{\CC}$  of $\CC$ such that
$Y^{(n)}_q\to Y_q$ a.s., for all $q\in Dy$, where $Dy=\{
qT\,:\, q\ \mbox{is a dyadic rational in}$ \ $[0,1]\}$.
\end{lem}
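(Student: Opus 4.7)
The plan is to combine the finite-dimensional approximation supplied by the preceding lemma with a diagonal/Borel--Cantelli argument to upgrade to simultaneous a.s.\ convergence on the countable dense set $Dy$.

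First, I would enumerate $Dy$ as $\{q_1,q_2,\ldots\}$ and, for each $n\in\N$, let $T_n=\{0,q_1,\ldots,q_n\}$, ordered increasingly as $0=s_0^{(n)}<s_1^{(n)}<\cdots<s_n^{(n)}\leq T$. Applying the preceding lemma to this finite collection of time points yields a sequence $(Z^{n,j})_{j\in\N}$ in the solid and fork-convex hull $\oll{\CC}$ with $Z^{n,j}_t\to Y_t$ a.s.\ as $j\to\infty$, for every $t\in T_n$.

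Since a.s.\ convergence implies convergence in probability, for each $n$ I can choose an index $j_n\in\N$ large enough that
\[ \PP\bigl[\,|Z^{n,j_n}_{q_k}-Y_{q_k}|>2^{-n}\,\bigr]<2^{-n} \quad\text{for all } k=1,\ldots,n.\]
Setting $Y^{(n)}:=Z^{n,j_n}\in\oll{\CC}$, I would fix $k\in\N$ and observe the summability
\[ \sum_{n\geq k}\PP\bigl[\,|Y^{(n)}_{q_k}-Y_{q_k}|>2^{-n}\,\bigr]<\sum_{n\geq k}2^{-n}<\infty.\]
By the Borel--Cantelli Lemma, $|Y^{(n)}_{q_k}-Y_{q_k}|\leq 2^{-n}$ holds for all but finitely many $n$ on a set of probability one, so $Y^{(n)}_{q_k}\to Y_{q_k}$ a.s. Intersecting the countably many full-measure sets obtained as $k$ ranges over $\N$ produces a single event of probability one on which $Y^{(n)}_q\to Y_q$ for every $q\in Dy$ simultaneously.

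I do not anticipate a substantive obstacle here; the conceptual work has been discharged in the previous lemma, and the present step is essentially a diagonalization. The only items to track are that each $Y^{(n)}$ lies in $\oll{\CC}$ (immediate from the previous lemma) and that one exercises mild care in ordering the finite sets $T_n$ before invoking the previous lemma, whose hypothesis requires strictly increasing time points.
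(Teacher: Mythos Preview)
Your proposal is correct and follows essentially the same approach as the paper: both invoke the preceding finite-time lemma on increasing exhausting finite subsets of $Dy$, extract one approximant per level satisfying a summable probability bound at all points of that level, and conclude via Borel--Cantelli. The only cosmetic difference is that the paper uses the nested dyadic grids $\{kT/2^m:0\leq k\leq 2^m\}$ (which come pre-ordered) and phrases the extraction as a diagonal sequence $Y^{(n)}=Y^{(n,n)}$, whereas you use an arbitrary enumeration of $Dy$ and reorder each finite set before applying the lemma; neither choice affects the argument.
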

\begin{proof}
Let $Y\in\CPP$. Define $\tmk=\frac{k}{2^m}T$ for
$m\in\N,k\in\set{0,1,\ldots,2^m}$. By the previous lemma,
for each $m$ we can find a sequence
$(Y^{(m,n)})_{n\in\N}$ such that \[ \lim_n
Y^{(m,n)}_{\tmk}=Y_{\tmk}\ \mbox{a.s.},\] for all $m,k$.
The sequence $(Y^{(m,n)})_{n\in\N}$ can be chosen in such
a way that for $m\in\N,k\in\set{0,1,\ldots,2^m},n\in\N$,
\[\PP[| Y^{(m,n)}_{\tmk}-Y_{\tmk}|>2^{-n}]<2^{-n}.\]
This will ensure that for the diagonal sequence
$Y^{(n)}=Y^{(n,n)}$, $Y^{(n)}_{q}\to Y_{q}$ a.s. for all $q\in
Dy$.
\end{proof}

\begin{proof}[Proof ({\bf of the Filtered Bipolar Theorem \ref{fbt}})]
Let $\CC'$ be the smallest solid, fork-convex and closed subset
of $\SUb$ containing $\CC$. By the previous lemma, for each $Y\in\CPP$ there
is a sequence $(Y^{(n)})_{n\in\N}$ such that $Y^{(n)}_q\to Y_q$ as $n\to\infty$ for
each $q\in Dy$. $Y$ is c\' adl\' ag so it follows from the definition that
$Y^{(n)}$ Fatou converges to $Y$, and, consequently, $Y\in\CC'$ so $\CPP\subseteq\CC'$.
Conversely, since $\one\in\CP$, Proposition  \ref{one} implies $\CC'\subseteq\CPP$,
thus proving the claim of the theorem.
 \end{proof}

\section{AN APPLICATION TO MATHEMATICAL FINANCE}

Let $S$ be a semimartingale taking values in $\R^d$ defined on
$(\Omega,\FF,(\FF_t)_{t\in [0,T]},$ $\PP)$.  We will interpret $S$
as the price process of $d$ risky assets in a securities market
with the time horizon $T$. By taking the constant process $1$ as
the num\' eraire we assume our prices are already discounted.

An agent with initial endowment $x$ investing in this
market chooses a predictable $S$-integrable process $H$
and an adapted nondecreasing c\`{a}dl\`{a}g process $C$
with $C_0=0$. The triple $(x,H,C)$ is called an {\bf
investment-consumption strategy}.  The process $H_t$ can
be interpreted as the amount of each asset held, and
$C_t$ is the cumulative amount spent on consumption prior
to time $t$.

With an investment-consumption
strategy we associate a process $X^{x,H,C}$ defined by
\begin{eqnarray}
X^{x,H,C}=x+\int_0^t H_u\, dS_u-C_t.
\end{eqnarray}
$X^{x,H,C}$ represents the value of the agent's current
holdings and is called {\bf the wealth process associated
with investment-consumption strategy} $(x,H,C)$ . An
investment-consumption strategy $(x,H,S)$ is called {\bf
admissible} if its wealth process remains nonnegative,
i.e. if $X^{x,H,C}_t\geq 0$ for all $t$. The set of all
wealth processes of admissible investment-consumption
strategies with an initial endowment of less than or
equal to $x$ will be denoted by $\XC(x)$. If for an
investment-consumption strategy $(x,H,C)$, we have
$C\equiv 0$, the pair $(x,H)$ is called a {\bf pure
investment strategy} and the set of all wealth processes
of admissible pure investment strategies with initial
endowment less than or equal to $x$ is denoted by
$\XX(x)$.

To have a realistic model of the market, we assume a
variant of the non-arbitrage property by postulating the
existence of a probability measure $\QQ$ on $\FF$,
equivalent to $\PP$, such that each $X\in\XX(1)$ is a
local martingale under $\QQ$. Any such measure $\QQ$ is
 called {\bf an equivalent local martingale measure}, and the set
of all such measures is denoted by $\MM$
 (  we
refer the reader to \cite{FT} and \cite{UNB} for an
in-depth analysis of the relation between existence of
equivalent local martingale measures and the
non-arbitrage properties). If $Y^{\QQ}$ is a
c\`{a}dl\`{a}g process of the form \[
Y^{\QQ}_t=\et{\frac{d\QQ}{d\PP}}\] for some $\QQ\in\MM$,
then $Y^{\QQ}$ is called a {\bf local martingale density}
and $\YY^e$ denotes the set of all such processes. The
Optional Decomposition Theorem (see \cite{9} for the
original result, \cite{6}, \cite{3}, \cite{7}, and
\cite{8} for more general versions) is the fundamental
tool in our analysis. In particular, in our setting,
Theorem 2.1. in \cite{6} states that a nonnegative
c\`{a}dl\`{a}g process $X$ with $X_0\leq x$ is in
$\XC(x)$ if and only if $X$ is a supermartingale under
each $\QQ\in\MM$. Similarly, $X$ is in $\X(x)$ if and
only if $X$ is a local martingale under each $\QQ\in\MM$.
\begin{rem} The Bayes rule for stochastic processes (see Lemma 3.5.3,
page 193. in \cite{KSB})  and the fact that
$x\XC(1)=\XC(x)$ imply that\label{r23}
$\XC(1)=(\YY^e)^{\times}$, so a nonnegative c\' adl\' ag
process $X$ is in $\XC(x)$ if and only if $X_0\leq x$ and
$XY^{\QQ}$ is a nonnegative c\' adl\' ag supermartingale
for all $Y^{\QQ}\in\YY^e$.\end{rem} Under certain duality
considerations (see Kramkov and
Scha\-cher\-ma\-yer$^{(1999)}$) it is necessary to
enlarge $\YY^e$ to
\[ \YY=\set{Y\in\PO\,:\, Y_0\leq 1, \mbox{ $YX$ is a supermartingale
for all $X\in\XX(1)$}}=(\XX(1))^{\times}\] in order to rectify poor
closedness properties of $\YY^e$.
The main result of this section describes the structure
of $\YY$, $\XX(1)$ and
$\XC(1)$: \
\bigskip
\begin{thm}\
\nopagebreak
\begin{description}\item{(a)}
$\YY=(\XC(1))^{\times}$, so  $Y(X-C)$ is a
supermartingale for all $X-C\in\XC(1)$.
\item{(b)} $\YY$ is closed, fork-convex and solid,
$\YY=(\YY^e)^{\times\times}$, and for each $Y\in\YY$ there is a
sequence $Y^{(n)}$ of elements in the solid hull of $\YY^e$ such
that $Y^{(n)}\to Y$ in the Fatou sense.
\item{(c)} $(\XX(1))^{\times\times}=(\XC(1))$.
\end{description}\label{myprop}
\end{thm}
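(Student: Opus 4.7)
The plan is to deduce the three parts of the theorem in sequence from the Filtered Bipolar Theorem applied to $\YY^e$, combined with Remark \ref{r23} and Proposition \ref{one}. First I would collect some setup: both $\YY^e$ and $\XX(1)$ are far-reaching subsets of $\SUb$ (the constant $\one$ lies in $\XX(1)$ and $\XC(1)$, while $Y^\QQ_T = d\QQ/d\PP > 0$ a.s. for any $Y^\QQ \in \YY^e$), and by Bayes' rule $Y^\QQ X$ is a nonnegative $\PP$-local martingale, hence a $\PP$-supermartingale, for $Y^\QQ \in \YY^e$ and $X \in \XX(1)$; therefore $\YY^e \subseteq \YY$. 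Since $\one \in \XX(1)$, Proposition \ref{one} shows that $\YY = (\XX(1))^\times$ is closed, solid and fork-convex in $\SUb$, and similarly for $(\XC(1))^\times$. Applying the Filtered Bipolar Theorem to $\YY^e$ and using Remark \ref{r23} to identify $\XC(1) = (\YY^e)^\times$, minimality of the bipolar then yields $(\XC(1))^\times = (\YY^e)^{\times\times} \subseteq \YY$, settling the easy inclusion in (a).

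The reverse inclusion $\YY \subseteq (\XC(1))^\times$, equivalently $\YY \subseteq (\YY^e)^{\times\times}$, is the technical heart of the argument. It amounts to showing that every $Y \in \YY$ is a Fatou limit of a sequence in the solid hull of $\YY^e$ (the further fork-convex hull is unnecessary, since $\YY^e$ is itself fork-convex). I would establish this density by a Kramkov-Schachermayer-style argument: fix an auxiliary $Y^\QQ \in \YY^e$, observe that $Y/Y^\QQ$ is a $\QQ$-supermartingale by Bayes, apply the Optional Decomposition Theorem under $\QQ$ to represent $Y/Y^\QQ$ as a $\QQ$-pure-investment wealth process minus a consumption process, and use this representation to construct a sequence of the form $B^{(n)} Y^{\QQ_n}$ with $B^{(n)} \in \DE$ and $Y^{\QQ_n} \in \YY^e$ that converges to $Y$ on a countable dense subset of $[0,T]$ and hence Fatou-converges to $Y$. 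This step completes (a) and at the same time delivers the Fatou approximation claimed in (b).

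Parts (b) and (c) then follow by routine polar algebra. For (b), the identity $\YY = (\XC(1))^\times = (\YY^e)^{\times\times}$ is immediate from (a) via Remark \ref{r23}, closedness/solidity/fork-convexity of $\YY$ have been noted above, and the approximation by the solid hull of $\YY^e$ is the content of the density step itself. For (c), take the polar of $\YY = (\XC(1))^\times$ to obtain $\YY^\times = (\XC(1))^{\times\times}$; but $\XC(1)$ is itself closed, solid, and fork-convex in $\SUb$ (solidity by absorbing the decay of any $B \in \DE$ into the consumption component, fork-convexity by concatenating investment-consumption strategies at an intermediate time), so the Filtered Bipolar Theorem gives $(\XC(1))^{\times\times} = \XC(1)$, while $\YY^\times = (\XX(1))^{\times\times}$ by definition of $\YY$; together these yield $(\XX(1))^{\times\times} = \XC(1)$. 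The main obstacle throughout is clearly the Fatou-density step of the second paragraph: producing the approximating sequence in $\DE\cdot\YY^e$ is exactly the Kramkov-Schachermayer enlargement construction, whose engine is the Optional Decomposition Theorem under an equivalent local martingale measure.
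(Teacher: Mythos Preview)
Your overall architecture is reasonable, but the ``technical heart'' paragraph contains a genuine gap, and it is precisely the step the paper handles by a completely different method.

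\textbf{The gap in the hard inclusion of (a).} You propose to show $\YY\subseteq(\YY^e)^{\times\times}$ by Fatou-approximating an arbitrary $Y\in\YY$ from the solid hull of $\YY^e$, and you want to produce the approximants via the Optional Decomposition Theorem applied to $Y/Y^{\QQ}$. This does not work as stated. By Bayes, $Y/Y^{\QQ}$ is a $\QQ'$-supermartingale if and only if $Y\cdot(Y^{\QQ'}/Y^{\QQ})$ is a $\PP$-supermartingale; since $Y\in(\XX(1))^{\times}$, this would follow if $Y^{\QQ'}/Y^{\QQ}\in\XX(1)$, but in an incomplete market the density ratio $Y^{\QQ'}/Y^{\QQ}$ is a $\QQ$-martingale that is in general \emph{not} of the form $1+\int H\,dS$. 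Hence you only know $Y/Y^{\QQ}$ is a supermartingale under the single measure $\QQ$, and the Optional Decomposition Theorem (which requires the supermartingale property under \emph{all} $\QQ'\in\MM$) is unavailable. A single-$\QQ$ Doob--Meyer or multiplicative decomposition gives a $\QQ$-local-martingale part that need not live in $\XX(1)$ or in $\YY^e/\YY^e$, and you give no mechanism for turning such a decomposition into a sequence $B^{(n)}Y^{\QQ_n}$ with $B^{(n)}\in\DE$, $Y^{\QQ_n}\in\YY^e$. That passage is the entire difficulty, and it is left as a bare assertion.

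\textbf{How the paper actually proceeds.} The paper inverts your logic: it proves $\YY\subseteq(\XC(1))^{\times}$ \emph{directly}, without any appeal to the Filtered Bipolar Theorem or to Optional Decomposition. Given $Y\in\YY$ and $X-C\in\XC(1)$ with $X=1+\int H\,dS\in\XX(1)$, one fixes $s<t$ and rescales the integrand after time $s$ by the factor $X_s/(X_s-C_s)$ on $\{X_s>C_s\}$, obtaining a new admissible pure investment $K$ with wealth $1+\int K\,dS\in\XX(1)$. The defining supermartingale property of $Y$ against this new element of $\XX(1)$, after a short computation, yields $\EE[Y_t(X_t-C_t)\mid\FF_s]\le Y_s(X_s-C_s)$. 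Only \emph{after} (a) is established does the paper invoke the Filtered Bipolar Theorem: from $\YY=(\XC(1))^{\times}=(\YY^e)^{\times\times}$ one reads off closedness, solidity, fork-convexity, and the Fatou approximation in (b) comes for free from the constructive proof of the Filtered Bipolar Theorem (since $\YY^e$ is already fork-convex). Part (c) is then pure polar algebra:
\[
\XC(1)\subseteq(\XC(1))^{\times\times}=(\YY^e)^{\times\times\times}=\YY^{\times}\subseteq(\YY^e)^{\times}=\XC(1),
\]
so no separate verification that $\XC(1)$ is closed is needed. In short: the missing idea in your proposal is the portfolio-rescaling trick that proves (a) directly; the Fatou approximation is a \emph{consequence}, not the engine.
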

\begin{proof}
\begin{description}
\item{(a)} By Remark \ref{r23}, it is sufficient to prove that
$\YY\subseteq (\XC(1))^{\times}$. Let $Y\in\YY$ and let
$Z_t=X_t-C_t\geq 0$ be an element of $\XC(1)$ with
$X\in\XX(1)$. Also, let  $C$ an nondecreasing, adapted,
c\`{a}dl\`{a}g process such that  $C_0=0$. We assume
$X_0=1$, take $s<t$ in $[0,T]$ and find $H$, a
predictable $S$-integrable process such that
$X_{\cdot}=1+\int_0^{\cdot} H_u\,dS_u$. We define a
process $K$ by putting $K_u=H_u$, for all $u$, on the set
$\set{X_s=C_s}$ and, for $u\leq s$, on $\set{X_s>C_s}$.
For $u>s$ on $\set{X_s>C_s}$ we set
\begin{eqnarray}
K_u=\frac{X_s}{X_s-C_s} H_u.
\end{eqnarray} so that $K$ is an $S$-integrable predictable
process.  To show that $(1,K)$ is an admissible pure
investment scheme, we that (with convention
$\frac{0}{0}=0$)
\begin{eqnarray}
\label{NewPredictable} 1+\int_0^v K_u\,dS_u=\left\{
\begin{array}{lll}
X_v & ,& v\leq s\\ X_s+\frac{X_s}{X_s-C_s} (X_v-X_s) &,& v>s
\end{array}\right.
\end{eqnarray}
because that $X_v=X_s$ for $v\geq s$ on $\set{X_s=C_s}$
by Proposition \ref{yor}, since $X-C$ is a
supermartingale under each $\QQ\in\MM$. Furthermore,
\begin{eqnarray}
X_s+\frac{X_s}{X_s-C_s} (X_v-X_s)&\geq&
X_s+\frac{X_s}{X_s-C_s} (X_v-C_v-(X_s-C_s))\nonumber
\\&=&X_s\frac{(X_v-C_v)}{X_s-C_s}\geq 0.
\end{eqnarray}
Therefore $(1,K)$ is indeed an admissible pure investment
scheme, and, by the definition of $\YY$, the process
$Y_v(1+\int_0^v K_u\,dS_u)$ is a supermartingale. Thus we
may write
\begin{eqnarray} \nonumber
Y_sX_s&=&Y_s\left(1+\int_0^s K_u\,dS_u\right)\geq \EE\left[Y_t\left(
1+\int_0^t
K_u\,dS_u\right)|\FF_s\right] \\ \nonumber
&=&\EE\left[Y_t\left(X_s+\int_{s+}^t
\frac{X_s}{X_s-C_s}H_u\,dS_u\right)|\FF_s\right]\\&=&\frac{X_s}{X_s-C_s}
\EE\left[Y_t\left(X_t-C_s\right)|\FF_s\right]\\&\geq&\frac{X_s}{X_s-C_s}
\EE\left[Y_t\left(X_t-C_t\right)|\FF_s\right].\nonumber
\end{eqnarray}
If we multiply both sides by $\frac{X_s-C_s}{X_s}$ we get
the desired supermartingale property.
\item{(b)} $\YY^e$ is obviously a subset of $\SUb$, and it
is far-reaching since for each $Y\in\YY^e$, there is a $\QQ\in\MM$
such that $Y_T=\frac{d\QQ}{d\PP}>0$ a.s. Since
$\YY=(\XC(1))^{\times}=(\YY^e)^{\times\times}$ and because of  the
Filtered Bipolar Theorem \ref{fbt}, $\YY$ is the smallest solid,
fork-convex and closed subset of $\SUb$ containing $\YY^e$. It is
easy to check that $\YY^e$ is fork-convex so from the proof of
Theorem \ref{fbt} we infer that for each $Y\in\YY$ we can find a
sequence $Y^{(n)}$ is the solid hull of $\YY^e$ such that
$Y^{(n)}$ Fatou-converges to $Y$.
\item{(c)} From (b) and Remark \ref{r23},
\[ \XC(1)\subseteq
(\XC(1))^{\times\times}=(\YY^e)^{\times\times\times}
=(\YY)^{\times}\subseteq(\YY^e)^{\times}= \XC(1),\] so
$\XC(1)=(\YY)^{\times}=(\XX(1))^{\times\times}$.
\end{description}
\end{proof}

As a corollary to this result we also give a simple
duality characterization of admissible consumption
processes. Let $\mu$ be a probability measure on the
Borel sets of $[0,T]$, diffuse on $[0,T)$, (i.e. the only
atom we allow is on $\set{T}$). A process $C_t$ will be
called an $x$-{\bf admissible consumption process} if
there is an admissible investment-consumption strategy of
the form $(x,H,C)$. If there is a progressively
measurable nonnegative process $c$ such that
$C_t=\int_0^t c(u)\,\mu(\du)$, then $C$ will be called an
{\bf absolutely continuous consumption process} and $c$
its {\bf consumption density}.
\begin{cor}\label{koro}
Let $x>0$. A nonnegative progressively measurable process $c$ is a
consumption density of an $x$-admissible absolutely continuous
consumption process if and only if
\begin{equation}\label{chr} \sup_{Y\in\YY}\EE\left[\int_0^T
Y_uc(u)\,\mu(\du)\right]\leq x.\end{equation}
\end{cor}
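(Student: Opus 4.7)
The argument has two directions. Necessity uses Theorem~\ref{myprop} together with the approximation machinery from the proof of Theorem~\ref{fbt}, while sufficiency invokes a Snell-envelope construction combined with the Optional Decomposition Theorem.

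For necessity, assume $(x,H,C)$ is admissible with $C_t=\int_0^t c(u)\mu(du)$, and set $\tilde X_t:=X^{x,H,C}_t+C_t = x+\int_0^t H_u\,dS_u$, so that $\tilde X\in\XX(x)$ and $\tilde X\ge C$ pointwise. For $Y=Y^\QQ\in\YY^e$, Bayes' rule combined with Fubini gives $\EE\bigl[\int_0^T Y_u c(u)\mu(du)\bigr]=\EE^\QQ[C_T]$; since $\tilde X$ is a nonnegative $\QQ$-local martingale (hence a $\QQ$-supermartingale) dominating $C$, $\EE^\QQ[C_T]\le\EE^\QQ[\tilde X_T]\le x$. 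For $Y=B\tilde Y$ in the solid hull of $\YY^e$ (with $B\in\DE$, $\tilde Y\in\YY^e$), the bound $B\le 1$ yields $\EE[\int Y c\mu]\le\EE[\int\tilde Y c\mu]\le x$. For general $Y\in\YY$, Theorem~\ref{myprop}(b) and the diagonal construction in the proof of Theorem~\ref{fbt} produce a sequence $(Y^{(n)})$ in the solid hull of $\YY^e$ with $Y^{(n)}_q\to Y_q$ a.s.\ for every dyadic $q$. I then pass the inequality $\EE[\int Y^{(n)} c\mu]\le x$ to the limit by applying Fatou's lemma to the right-Riemann sums $\sum_k Y^{(n)}_{t_k}(C_{t_{k+1}}-C_{t_k})$ with nodes in $Dy$, and refining the partition, using the right-continuity of $Y$ and the $\mu$-continuity of $C$.

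For sufficiency, assume the budget constraint; its restriction to $Y\in\YY^e$ yields $\sup_{\QQ\in\MM}\EE^\QQ[C_T]\le x$. Introduce the Snell-envelope type process $V_t:=\esssup_{\QQ\in\MM}\EE^\QQ[C_T\mid\FF_t]$. Stability of $\MM$ under pasting of equivalent local martingale measures at stopping times implies $V$ admits a c\`adl\`ag version that is a $\QQ$-supermartingale for every $\QQ\in\MM$; since $C$ is nondecreasing, $V_t\ge\EE^\QQ[C_T\mid\FF_t]\ge C_t$, and $V_0=\sup_\QQ\EE^\QQ[C_T]\le x$. The Optional Decomposition Theorem applied to $V$ produces a predictable $S$-integrable $H$ and an adapted nondecreasing $K$ with $V=V_0+\int_0^\cdot H_u\,dS_u-K$. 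Setting $\tilde X:=x+\int_0^\cdot H_u\,dS_u=(x-V_0)+V+K\ge V\ge C$, we obtain $X^{x,H,C}=\tilde X-C\ge 0$, so $(x,H,C)$ is admissible with consumption density $c$.

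The principal obstacle is the Fatou passage in the necessity part: the convergence $Y^{(n)}_q\to Y_q$ holds only on the countable $\mu$-null set $Dy$, and Fatou convergence does not of itself supply pointwise convergence of $Y^{(n)}_u$ at a fixed $u\in[0,T]$. Circumventing this requires the Riemann-sum detour through $Dy$ followed by partition refinement, exploiting the right-continuity of $Y$ and the continuity of $C$ to close the bound.
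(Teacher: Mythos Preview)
Your sufficiency argument (Snell envelope of $C_T$ under all $\QQ\in\MM$, then Optional Decomposition) is exactly the paper's argument, so that direction is fine.

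The necessity direction has a real gap. You correctly isolate the obstacle---Fatou convergence gives you $Y^{(n)}_q\to Y_q$ only on the dyadic set $Dy$, which is $\mu$-null---but your Riemann-sum detour does not close it. Two concrete problems: first, for the sum $\sum_k Y^{(n)}_{t_k}(C_{t_{k+1}}-C_{t_k})$ with $Y^{(n)}$ evaluated at the \emph{left} endpoint, there is no reason the expectation is bounded by $x$; the increment $C_{t_{k+1}}-C_{t_k}$ is $\FF_{t_{k+1}}$-measurable, not $\FF_{t_k}$-measurable, so even for a martingale density $Y^\QQ$ you cannot collapse $\EE[Y^\QQ_{t_k}(C_{t_{k+1}}-C_{t_k})]$ to $\EE^\QQ[C_{t_{k+1}}-C_{t_k}]$. (With $Y$ at the right endpoint this would work, but then you face the opposite difficulty in the partition-refinement step, since right-continuity of $Y$ controls limits from the right, not the left.) Second, even if the individual sums were bounded by $x$, you are taking two limits---$n\to\infty$ and then mesh $\to 0$---and you have given no argument controlling the Riemann-sum approximation of $\int Y^{(n)}_u\,dC_u$ uniformly in $n$.

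The paper avoids all of this with an intermediate lemma (Lemma~\ref{linf}): if $(Y^{(n)})$ Fatou-converges to $Y$ in $\SUb$, then in fact $Y_t=\liminf_n Y^{(n)}_t$ a.s.\ for every $t$ outside a \emph{countable} subset of $[0,T)$. The proof uses that $Y'_t:=\liminf_n Y^{(n)}_t$ is itself a supermartingale, that $t\mapsto\EE[Y'_t]$ has only countably many right-discontinuities, and the Backward Supermartingale Convergence Theorem along $q_k\searrow t$, $q_k\in\TT$, to identify $Y_t=Y'_t$ at continuity points. Since $\mu$ is diffuse on $[0,T)$, the exceptional set is $\mu$-null, and then a single application of Fatou's lemma to $\EE\bigl[\int_0^T(\liminf_n Y^{(n)}_u)\,c(u)\,\mu(du)\bigr]$ finishes the argument with no Riemann sums at all. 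This is the missing idea in your proposal.
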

Before we give the proof, we need the following lemma:
\begin{lem}\label{linf}
If $\ynn$ is a sequence in $\SUb$, Fatou converging to $Y\in\SUb$,
then there is a countable set $K\subseteq [0,T)$ such that for
$t\in[0,T]\setminus K$, we have $Y_t=\liminf_n \ynn_t$ a.s.
\end{lem}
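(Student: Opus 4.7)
My plan is to realize $Y$ as the c\`adl\`ag regularization of the auxiliary process $Z_t := \liminf_n Y^{(n)}_t$ and then invoke Doob's regularization theorem, which will pin down the exceptional set $K$ as the right-discontinuities of a single nonincreasing deterministic function.

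First, conditional Fatou applied to $\EE[Y^{(n)}_t \mid \FF_s] \leq Y^{(n)}_s$ yields $\EE[Z_t \mid \FF_s] \leq \liminf_n \EE[Y^{(n)}_t \mid \FF_s] \leq Z_s$ for $s \leq t$, so $(Z_t)_{t \in [0,T]}$ is a nonnegative supermartingale with $\EE[Z_0] \leq 1$. Writing $W_s := \limsup_n Y^{(n)}_s \geq Z_s$, the Fatou convergence hypothesis (Definition \ref{FatD1}) gives $\liminf_{s \downarrow t, s \in \TT} Z_s = Y_t = \limsup_{s \downarrow t, s \in \TT} W_s$ a.s.; sandwiching $\limsup_{s \downarrow t, s \in \TT} Z_s$ between these two quantities forces $\lim_{s \downarrow t, s \in \TT} Z_s = Y_t$ a.s.\ for every $t$. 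A further application of conditional Fatou (with the supermartingale inequality for $Z$) then gives $Y_t \leq Z_t$ a.s.

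Now I invoke Doob's regularization theorem for the supermartingale $Z$: its right-limit $Z^+_t := \lim_{s \downarrow t, s \in \TT} Z_s$ exists a.s.\ simultaneously for all $t$, is a c\`adl\`ag supermartingale, and under the usual conditions $Z^+_t$ is $\FF_t$-measurable, which combined with conditional Fatou gives $Z^+_t \leq Z_t$ a.s. The previous paragraph identifies $Z^+_t = Y_t$ a.s. The classical identity $\EE[Z^+_t] = \chi(t+) := \lim_{s \downarrow t,\, s \in \TT} \EE[Z_s]$ then shows $Z^+_t = Z_t$ a.s.\ whenever the nonincreasing function $\chi(t) := \EE[Z_t]$ is right-continuous at $t$. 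Taking $K \subseteq [0,T)$ to be the countable set of right-discontinuities of $\chi$, we conclude $Y_t = Z_t = \liminf_n Y^{(n)}_t$ a.s.\ for every $t \in [0,T] \setminus K$; the endpoint $t = T$ is covered by the Fatou convergence convention $Y_T = \lim_n Y^{(n)}_T$.

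The main obstacle is the identity $\EE[Z^+_t] = \chi(t+)$: I would establish it by iterating the supermartingale inequality to see that $\EE[Z_s \mid \FF_t]$ is nonincreasing in $s > t$, so as $s \downarrow t$ it converges monotonically (both a.s.\ and in $L^1$) to some $L_t \leq Z_t$ with $\EE[L_t] = \chi(t+)$; one then matches $L_t$ with $Z^+_t = \EE[Z^+_t \mid \FF_t]$ (using the right-continuous filtration) by comparing this monotone limit with the conditional Fatou estimate applied to $Z_s \to Z^+_t$.
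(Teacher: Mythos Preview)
Your argument is correct and follows essentially the same route as the paper: define the auxiliary supermartingale $Z_t=\liminf_n Y^{(n)}_t$, take $K$ to be the right-discontinuities of $t\mapsto\EE[Z_t]$, prove $Y_t\leq Z_t$ via conditional Fatou and the supermartingale inequality, and then force equality off $K$ by matching expectations. The only cosmetic difference is that the paper invokes the Backward Supermartingale Convergence Theorem directly to get $Z_{q_k}\to Y_t$ in $L^1$ (hence $\EE[Y_t]=\EE[Z_t]$), whereas you package the same step through Doob's regularization and the identity $\EE[Z^+_t]=\chi(t+)$; note that your last-paragraph sketch only yields $Z^+_t\leq L_t$, so to close the loop you still need the $L^1$ convergence of $Z_s$ to $Z^+_t$ (i.e.\ exactly the backward supermartingale convergence the paper cites).
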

\begin{proof} Let $\TT$ be the countable dense subset of $[0,T]$
as in the definition of Fatou convergence for
$(\ynn)_{n\in\N}$. Putting $Y'_t=\liminf_n \ynn_t$, it is
easy to see that $Y'_t$ is a nonnegative supermartingale.
Let $K$ be the set of points of right-discontinuity of
the function $t\mapsto \EE[Y'_t]$. Since $Y'$ is a
supermartingale, $t\mapsto \EE[Y'_t]$ is nonincreasing so
$K$ is a countable subset of $[0,T)$. For $t\in
[0,T]\setminus K$, and $q_n\searrow t$, $q_k\in \TT$,
$(Y'_{q_k})_{k\in\N}$ is a backward supermartingale
bounded in $L^1$.  By the Backward Supermartingale
Convergence Theorem (see \cite{5}, Theorem II.2.3, page
58) and the definition of Fatou convergence, $Y'_{q_k}\to
Y_t$ a.s and in $L^1$. Therefore
\begin{equation} \label{fin1}
\EE[Y_t]=\EE[Y'_t].\end{equation} On the other hand, for
$t\in [0,T]\setminus K$,
\begin{equation}
\label{fin2}
Y_t=\et{Y_t}=\et{\liminf_k Y'_{q_k}}\leq \liminf_k \et{Y'_{q_k}}
\leq Y'_t,\end{equation}
 since $Y'$ is a supermartingale.
From (\ref{fin1}) and (\ref{fin2}) we get $Y'_t=Y_t$ a.s.
for $t\in [0,T]\setminus K$.
 \end{proof}

\begin{proof}[Proof ({\bf of  Corollary \ref{koro}})]\
If (\ref{chr}) holds, the Bayes rule for stochastic
processes and Fubini's theorem give
$\sup_{\QQ\in\MM}\EE_{\QQ}[C_T]\leq x$, where
$C_t=\int_0^t c(u)\,\mu(\du)$. Following \cite{6} we
define $F_t=\esssup_{\QQ\in\MM}\EE_{\QQ}[C_T|\FF_t]$. By
Theorem 2.1.1 in \cite{9}, a modification of $F_t$ can be
chosen in such a way to make $F_t$ a c\`{a}dl\`{a}g
supermartingale under each $\QQ\in\MM$. Furthermore,
$F_0=x'\leq x$. The Optional Decomposition Theorem
guarantees the existence of an $X\in\XX(x')$ and a
c\`{a}dl\`{a}g nonincreasing process $D$ with $D_0=0$
such that $F_t=X_t-D_t$. Since
$C_t=\EE_{\QQ}[C_t|\FF_t]\leq \EE_{\QQ}[C_T|\FF_t]$ we
conclude that $F_t\geq C_t$ and so $(x-x')+X_t-C_t\geq
X_t-D_t-C_t=F_t-C_t\geq 0$. Thus, $C_t$ is an
$x$-admissible consumption process.

Conversely, suppose $c$ is a consumption density of an absolutely
continuous consumption process $C_T$. It is easy to see that
$\sup_{\QQ\in\MM}\EE_{\QQ}[C_T]\leq x$ must hold, so
\[ \sup_{Y^{\QQ}\in\YY^{e}}\int_0^T
\EE[Y^{\QQ}_uc(u)]\,\mu(\du)=\sup_{Y^{\QQ}\in\YY^{e}}\EE\left[\int_0^T
Y^{\QQ}_uc(u)\,\mu(\du)\right]\leq x.\] Let $Y\in\YY$. By
Theorem \ref{myprop}, b), we can find a sequence
$(\ynn)_{n\in\N}$ in the solid hull of $\YY^e$ such that
$(\ynn)_{n\in\N}$ Fatou converges to $Y$. By the previous
lemma, $\liminf_n\ynn_t= Y_t$ for all $t$ in $[0,T]$,
except for maybe $t$ in a countable subset of $[0,T)$ and
thus for $t$ $\mu$-a.e. By Fatou lemma and monotonicity,
\begin{eqnarray*}\nonumber \EE[
\int_0^TY_uc(u)\,\mu(\du)]&=&\EE\left[\int_0^T
\liminf_n Y^{{(n)}}_uc(u)\,\mu(\du)\right]\leq\\
\nonumber &\leq& \liminf_n \EE\left[\int_0^T
Y^{{(n)}}_uc(u)\,\mu(\du)\right]\leq x.
\end{eqnarray*}
\end{proof}

{\bf Acknowledgments:} This work is a revised version of Chapter 1
of the author's Master's Thesis submitted to University of Zagreb,
Croatia, July 1999. I would like to thank Walter Schachermayer for
being a supportive advisor and conversations with whom gave birth
to this paper. Acknowledgements go to {\" O}sterreichische
Akademische Austauschdienst of the Austrian government and
Institut f\" ur Finanz- und Versicherungsmathematik, Technische
Universit\"at Wien, Austria for financial support. Many thanks to
Ioannis Karatzas, Zoran Vondra\v cek and the anonymous referee for
useful suggestions on presentation.

\providecommand{\bysame}{\leavevmode\hbox to3em{\hrulefill}\thinspace}


\begin{thebibliography}{KPR84}

\bibitem[BS99]{bipolar}
W.~Brannath and W.~{Schachermayer}, \emph{A bipolar theorem for subsets of
  {$L^0_+(\Omega,{\mathcal F},{\mathcal P})$}}, S\' eminaire de Probabilit\' es
  \textbf{XXXIII} (1999), 349--354.

\bibitem[DS93]{FT}
F.~Delbean and W.~Schachermayer, \emph{A general version of the fundamental
  theorem of asset pricing}, Mathematische Annalen \textbf{300} (1993),
  463--520.

\bibitem[DS98]{UNB}
F.~Delbaen and W.~Schachermayer, \emph{The fundamental theorem of asset pricing
  for unbounded stochastic processes}, Mathematische Annalen \textbf{312}
  (1998), 215--250.

\bibitem[DS99]{8}
F.~Delbean and W.~Schachermayer, \emph{A compactness principle for bounded
  sequences of martingales with applications}, Proceedings of the Seminar of
  Stochastic Analysis, Random Fields and Applications, 1999.

\bibitem[EQ95]{9}
N.~{El Karoui} and M.~C. Quenez, \emph{Dynamic programming and pricing of
  contingent claims in an incomplete market}, SIAM Journal on Control and
  Optimization \textbf{33/1} (1995), 29--66.

\bibitem[FK97]{7}
H.~F{\"o}llmer and D.~Kramkov, \emph{Optional decomposition under constraints},
  Probability Theory and Related Fields \textbf{109} (1997), 1--25.

\bibitem[FK98]{3}
H.~F{\"o}llmer and Yu.~M. Kabanov, \emph{Optional decomposition and lagrange
  multipliers}, Finance and Stochastics \textbf{2} (1998), 69--81.

\bibitem[KPR84]{kapr}
S.~Kalton, N.~T. Peck, and J.~W. Roberts, \emph{An {F}-space sampler}, London
  Math. Soc. Lecture Notes \textbf{89} (1984).

\bibitem[Kra96]{6}
D.~Kramkov, \emph{Optional decomposition of supermartingales and hedging
  contingent claims in incomplete security markets}, Probability Theory and
  Related Fields \textbf{105} (1996), 459--479.

\bibitem[KS91]{KSB}
Ioannis Karatzas and Steven~E. Shreve, \emph{Brownian {M}otion and {S}tochastic
  {C}alculus}, second ed., Springer-Verlag, New York, 1991.

\bibitem[KS98]{KS}
I.~Karatzas and Steven~E. Shreve, \emph{Methods of {M}athematical {F}inance},
  Springer, New York, 1998.

\bibitem[KS99]{4}
D.~Kramkov and W.~{Schachermayer}, \emph{A condition on the asymptotic
  elasticity of utility functions and optimal investment in incomplete
  markets}, Annals of Applied Probability \textbf{9} (1999), no.~3, 904--950.

\bibitem[RY91]{5}
D.~Revuz and M.~Yor, \emph{Continuous {M}artingales and {B}rownian {M}otion},
  Springer Verlag, New York, 1991.

\bibitem[Sch86]{S86}
M.~Schwartz, \emph{New proofs of a theorem of {K}oml{\' o}s}, Acta Math. Hung.
  \textbf{47} (1986), 181--185.

\bibitem[{\v Z}it99]{gz2}
Gordan {\v Z}itkovi{\' c}, \emph{Maximization of utility of consumption in
  incomplete semimartingale markets}, working paper, Tehnische Universit{\" a}t
  Wien (1999).

\end{thebibliography}
\end{document}